\newtoks\prt
\numberwithin{equation}{section}
\newtheorem{thm}{Theorem}[section]
\newtheorem{question}[thm]{Question}
\newtheorem{lemma}[thm]{Lemma}
\newtheorem{prop}[thm]{Proposition}
\theoremstyle{definition}
\newtheorem{assumption}[thm]{Assumptions}
\newtheorem{convention}[thm]{Convention}
\def\eqn#1$$#2$${\begin{equation}\label#1#2\end{equation}}
\def\B{\mathcal B}
\def\D{\mathcal D}
\def\N{\mathcal N}
\def\ep{\varepsilon}
\def\en{\mathbb N}
\def\er{\mathbb R}
\def \f {\boldsymbol f}
\def \h {\boldsymbol h}
\def \g {\boldsymbol g}
\def\osc{\operatorname{osc}}
\def \reg {\partial _{\kern1pt\text{reg}}}
\def\di{\,\mbox{\rm d}}
\def\abs#1{\left|#1\right|}
\newcommand{\norm}[1]{\left\|#1\right\|}
\begin{document}

\title{Typical martingale diverges at a typical point}
\author{Ond\v{r}ej F.K. Kalenda and Ji\v{r}\'{\i} Spurn\'y}

\address{Department of Mathematical Analysis \\
Faculty of Mathematics and Physic\\ Charles University\\
Sokolovsk\'{a} 83, 186 \ 75\\Praha 8, Czech Republic}

\email{kalenda@karlin.mff.cuni.cz}
\email{spurny@karlin.mff.cuni.cz}

\subjclass[2010]{60G42, 54E52, 54E70}
\keywords{$L^1$-bounded martingale, $L^p$-bounded martingale, filtration of finite $\sigma$-algebras, oscillation, comeager set}

\thanks{Our research was supported by the grant GA\v{C}R P201/12/0290. The second author was also
supported by The Foundation of Karel Jane\v{c}ek for Science and Research.}

\begin{abstract}
We investigate convergence of martingales adapted to a given filtration of finite $\sigma$-algebras.
To any such filtration we associate a canonical metrizable compact space $K$ such that martingales adapted to
the filtration can be canonically represented on $K$. We further show that (except for trivial cases) typical martingale diverges at a comeager subset of $K$. `Typical martingale' means a martingale from a comeager set in any of the standard spaces of martingales. In particular we show that a typical $L^1$-bounded martingale of norm at most one converges almost surely to zero and has maximal possible oscillation on a comeager set.
\end{abstract}
\maketitle


\section{Introduction and preliminaries}

The well-known Doob Martingale Convergence Theorem (see, e.g., \cite[Theorem 1.3.2.8 on p.\ 25]{spaces} or \cite[275G]{FR2})
says that any $L^1$-bounded martingale converges almost surely. In some cases the underlying probability space has also a canonical topological structure. It is the case for example of martingales on the Cantor set. In such a case it is also natural to ask how large the set of convergence of a martingale is in the sense of Baire category.

It is well known that the $\sigma$-ideal of null sets is incomparable with the $\sigma$-ideal of meager sets,
in fact the unit interval can be expressed as a union of two sets, one of them meager and the other one Lebesgue null (see, e.g., \cite[Theorem 1.6]{oxtoby}). This easy fact is a prototype of various `paradoxical decompositions' of certain spaces into two sets belonging to different $\sigma$-ideals. For example, in \cite{PT,MaMa,Mat,LPT} such decompositions are used to illustrate deep problems on differentiability, in \cite{Zaj}
a different behaviour of two $\sigma$-ideals in the space of continuous functions is described, in \cite{DoMy} a similar feature is presented in the group of permutations of the natural numbers.

On the other hand, there are `almost everywhere' type results which hold both in the measure sense and in the category sense. It is the case, for example, for results `up to a $\sigma$-porous set', see, e.g. \cite{Dol,BEH,CZ}.

A different behaviour with respect to measure and with respect to category can be also illustrated by the Strong Law of Large Numbers.
Indeed, consider the Cantor set $C=\{0,1\}^{\en}$ with the standard product probability measure. Then for almost all $x\in C$ we have $\lim\frac1n(x_1+\dots+x_n)=\frac12$, while it is easy to check that the set of those $x\in C$ such that the above limit exists is meager.
So, it seems to us that it is natural to clarify the behaviour of martingales with respect to the Baire category. It turns out that the results
are similar but the proofs are not so easy. This question was investigated by the second author and M.~Zelen\'y in \cite{SpZe} for pointwise bounded martingales on the Cantor set. It is shown there, in particular, that for a comeager set of martingales the set of convergence is meager.

In the present paper we show that results analogous to those of \cite{SpZe} hold in a much more general setting. First, we consider not only pointwise bounded martingales, but also the space of $L^1$-bounded martingales and that of $L^p$-bounded martingales for $p\in(1,\infty]$. The case $p=\infty$ covers the mentioned results of \cite{SpZe}. And secondly, we consider not only martingales on the Cantor set, but general martingales adapted to a given filtration of finite $\sigma$-algebras. In fact, given any such filtration, we construct a canonical compact metrizable space together with a Borel probability measure and canonical filtration such that martingales adapted to the original filtration are `isomorphic' to the martingales adapted to the new one. And in this setting we show that a typical martingale (typical in the sense of category in some of the spaces of martingales)
diverges on a comeager set.

Let us point out Theorem~\ref{main:1p} where we show that a typical martingale of $L^1$-norm at most one converges almost surely to zero and diverges in the strongest possible sense on a comeager set. This results may be considered as an ultimate `paradoxical decomposition' provided by martingales.

All the results are formulated and proved for martingales adapted to a given filtration of finite $\sigma$-algebras since it is the easiest nontrivial case. Similar results can be proved by the same methods in a more general case of filtrations of discrete $\sigma$-algebras (i.e., $\sigma$-algebras generated by some countable partition of the underlying space). In this case it is also easy and canonical to represent martingales as sequences
of continuous functions on a Polish space (not necessarily compact). It is discussed in the last section.

The case of a general filtration is more complicated. Even in this case we can represent martingales as sequences of continuous functions on a
completely metrizable space (separable if the probability is of countable type). It can be done using the standard construction of the measure algebra
of a given probability space equipped with the Fr\'echet-Nikod\'ym metric. So, the question on convergence with respect to the Baire category has also a canonical sense in the general case, but it seems to be more involved and we do not know the answer. We will comment possible generalizations in the last section.

The paper is organized as follows: In the rest of this section we collect basic definitions and recall some well-known
results on martingales. In the second section we define several spaces of martingales and topologies on them (norm topology
and topology of pointwise convergence). In the third section we present the above announced construction of a compact
metrizable space canonically associated to a filtration of finite $\sigma$-algebras. In Section 4 we collect our main results.
Section 5 contains several lemmas, in Section 6 we complete the proofs. The last section contains final remarks on possible generalization
of our results.

\medskip

Let us start with the basic definitions.

Let $(\Omega,\Sigma,P)$ be a probability space. I.e., $\Omega$ is a set, $\Sigma$ a $\sigma$-algebra of subsets of $\Omega$ and $P$ a probability measure defined on the $\sigma$-algebra $\Sigma$. A \emph{filtration} is an increasing sequence $(\Sigma_n)$ of $\sigma$-subalgebras of $\Sigma$. Denote by $\Sigma_\infty$ the $\sigma$-algebra generated by $\bigcup_{n\in\en} \Sigma_n$.

In the sequel we will suppose that the described objects are fixed.

A \emph{martingale adapted to the filtration $(\Sigma_n)$} is a sequence $\f=(f_n)$ of functions with the following two properties.
\begin{itemize}
	\item $f_n\in L^1(\Omega,\Sigma_n,P|_{\Sigma_n})$ for each $n\in\en$.
	\item $\int_E f_n\di P=\int_E f_m\di P$ whenever $n\le m$ and $E\in \Sigma_n$.
\end{itemize}

In the sequel we will write shortly $L^1(\Sigma_n)$ in place of $L^1(\Omega,\Sigma_n,P|_{\Sigma_n})$ (and similarly for other $L^p$ spaces).
It is easy to check from the definitions that $\|f_n\|_{L^1(\Sigma_n)}\le \|f_m\|_{L^1(\Sigma_m)}$ for $n\le m$.
A martingale $(f_n)$ is called \emph{$L^1$-bounded} if $\sup_n \|f_n\|_{L^1(\Sigma_n)}<+\infty$. Let us recall several well-known facts.

\begin{prop}\label{p:zaklad} Let $(f_n)$ be an $L^1$-bounded martingale adapted to the filtration $(\Sigma_n)$. Then the following hold:
\begin{itemize}
	\item For $P$-almost all $\omega\in\Omega$ the limit $\lim\limits_{n\to\infty} f_n(\omega)$ exists and is finite.
	\item The limit function $f_\infty$ belongs to $L^1(\Sigma_\infty)$ and $$\|f_\infty\|_{L^1(\Sigma_\infty)}\le \sup_{n\in\en}\|f_n\|_{L^1(\Sigma_n)}.$$
	\item The following assertions are equivalent:
\begin{itemize}
	\item[(i)] $\|f_\infty\|_{L^1(\Sigma_\infty)}= \sup_{n\in\en}\|f_n\|_{L^1(\Sigma_n)}$.
	\item[(ii)] The sequence $(f_n)$ converges to $f_\infty$ in the norm of $L^1(\Sigma_\infty)$.
	\item[(iii)] The sequence $(f_n)$ is uniformly integrable.
	\item[(iv)] For each $n\in\en$ we have $f_n=E(f_\infty|\Sigma_n)$.
\end{itemize}
\end{itemize}
\end{prop}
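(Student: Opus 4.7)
The plan is to follow the classical Doob route. For the first two items, fix a rational pair $a<b$ and denote by $U_n[a,b]$ the number of upcrossings of $[a,b]$ by $f_1,\dots,f_n$. Doob's upcrossing inequality, obtained by integrating the predictable strategy that ``buys'' at levels $\le a$ and ``sells'' at levels $\ge b$, gives
$$
(b-a)\,E(U_n[a,b])\le E[(f_n-a)^+]\le \|f_n\|_{L^1(\Sigma_n)}+|a|,
$$
so $E(U_n[a,b])$ stays bounded uniformly in $n$. Monotone convergence in $n$ together with a countable union over all rational pairs then yields that, almost surely, $(f_n(\omega))$ performs only finitely many upcrossings of every interval with rational endpoints, whence $\lim_n f_n(\omega)$ exists in $[-\infty,\infty]$. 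Fatou's lemma applied to $|f_n|$ gives
$$
\|f_\infty\|_{L^1(\Sigma_\infty)}\le \liminf_n \|f_n\|_{L^1(\Sigma_n)}\le \sup_n \|f_n\|_{L^1(\Sigma_n)}<\infty,
$$
so $f_\infty$ is finite almost everywhere, lies in $L^1(\Sigma_\infty)$, and satisfies the stated norm bound.

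For the four-way equivalence in the last item, the plan is to set up the cycle (ii)$\Rightarrow$(iv)$\Rightarrow$(iii)$\Rightarrow$(ii) together with (i)$\Leftrightarrow$(ii). The implication (ii)$\Rightarrow$(iv) is proved by passing to the limit in the martingale identity $\int_E f_n\di P=\int_E f_m\di P$ for $E\in\Sigma_n$ and $m\ge n$ (legitimate by $L^1$-convergence) and then invoking the defining property of conditional expectation. Step (iv)$\Rightarrow$(iii) is the standard fact that, for a fixed $g\in L^1$, the family $\{E(g\mid \mathcal G):\mathcal G\text{ a sub-}\sigma\text{-algebra of }\Sigma\}$ is uniformly integrable. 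Step (iii)$\Rightarrow$(ii) is Vitali's convergence theorem: uniform integrability combined with the already established almost-sure convergence yields $L^1$-convergence. The implication (ii)$\Rightarrow$(i) is immediate from continuity of the norm.

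The remaining link (i)$\Rightarrow$(ii) is handled by Scheff\'e's lemma in the following form: if $g_n\to g$ almost everywhere, $g_n,g\in L^1$, and $\|g_n\|_1\to\|g\|_1$, then $g_n\to g$ in $L^1$. Since the sequence $\|f_n\|_{L^1(\Sigma_n)}$ is non-decreasing and bounded above by $\|f_\infty\|_{L^1(\Sigma_\infty)}$, assumption (i) forces it to converge to $\|f_\infty\|_{L^1(\Sigma_\infty)}$, so Scheff\'e applies to $(f_n)$. The only genuinely non-trivial ingredient in the whole argument is the upcrossing estimate, which is the main obstacle behind the a.e.\ convergence in the first bullet; everything else is a routine synthesis of standard $L^1$-convergence principles, where the main care is merely in choosing a cyclic ordering of implications that avoids having to prove (i) or (iii) directly.
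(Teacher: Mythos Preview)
Your proposal is correct and follows the classical Doob route. The paper itself does not give a proof at all: it simply cites the textbook references (Long's \emph{Martingale spaces and inequalities} and Fremlin's \emph{Measure theory}, Vol.~2) for both the almost-sure convergence theorem and the equivalence of (i)--(iv), treating the proposition as a collection of well-known facts. Your argument is exactly the standard one found in those references---upcrossings plus Fatou for the first two bullets, and the cycle through Vitali and Scheff\'e for the equivalences---so there is nothing to compare beyond noting that you have spelled out what the paper chose to leave to the literature.

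One minor remark on presentation: when you write that the sequence $\|f_n\|_{L^1}$ is ``bounded above by $\|f_\infty\|_{L^1}$'', this is not a general fact but precisely the content of assumption (i); you use it correctly, but it would read more cleanly to say directly that (i) together with monotonicity of $n\mapsto\|f_n\|_{L^1}$ gives $\|f_n\|_{L^1}\to\|f_\infty\|_{L^1}$, which is the hypothesis of Scheff\'e.
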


Let us first explain some notions and notation used in the proposition.

 A bounded set $A\subset L^1(\Sigma)$ is \emph{uniformly integrable} if it satisfies one of the following equivalent
 conditions:
\begin{itemize}
  \item $\forall \varepsilon>0\; \exists c >0\; \forall f\in A: \int_\Omega (|f|-c)^+\di P<\varepsilon$,
	\item $\forall \varepsilon>0\; \exists \delta>0 \;\forall E\in\Sigma, P(E)<\delta\; \forall f\in A: \int_E|f|\di P<\varepsilon$.
\end{itemize}
The first condition follows \cite[Definition 246A and Remark 246B(d)]{FR2}, the second one is equivalent by \cite[Theorem 246G]{FR2}.

Further, if $f\in L^1(\Sigma)$ and $\Sigma'\subset\Sigma$ a $\sigma$-subalgebra, the symbol $E(f|\Sigma')$ denotes the \emph{conditional expectation} of $f$ with respect to $\Sigma'$, i.e., it is a function $g$ from $L^1(\Sigma')$ satisfying
$\int_E g=\int_E f$ for each $E\in \Sigma'$. Such a $g$ exists and is unique (as an element of $L^1(\Sigma')$), cf. \cite[233D and 242J]{FR2}.

Now let us comment the proof of the above proposition.
The first two assertions form content of the Doob Martingale Convergence Theorem (see, e.g., \cite[Theorem 1.3.2.8 on p. 25]{spaces} or \cite[275G]{FR2}). The third assertion follows from \cite[Theorem 1.3.2.9 on p. 26]{spaces} or from \cite[275H and 275I]{FR2}.

\section{Spaces of martingales}

Let $\f=(f_n)$ be a martingale adapted to the sequence $(\Sigma_n)$ and $1\le p\le\infty$. The martingale $\f$ is called \emph{$L^p$-bounded} if $f_n\in L^p(\Sigma_n)$ for each $n\in\en$ and, moreover, $\sup\|f_n\|_{L^p}<+\infty$.
The space of all $L^p$-bounded martingales will be denoted by $M_p$. If we equip $M_p$ with the norm
$$\|\f\|_p=\sup_{n\in\en}  \|f_n\|_{L^p},$$
it will become a Banach space. We will denote by $M_p^u$ the subspace of $M_p$ formed by uniformly integrable martingales.

This definition follows \cite[Definition 1.3.3 on p. 13]{spaces} with notation from \cite[Section 1]{Tr}.

Notice that $\|f_n\|_{L^p}\le\|f_m\|_{L^p}$ whenever $n\le m$. This well-known fact follows for example from \cite[Remark 2 on p. 11]{spaces}. Alternatively, it follows from the martingale property using the following formulas for the norms:

$$\begin{aligned}
\|f_n\|_{L^1}& = \sup\left\{ \sum_{j=1}^k \left| \int_{A_j} f_n\di P\right| : A_1,\dots,A_k\in\Sigma_n\mbox{ pairwise disjoint}\right\},
\\
\|f_n\|_{L^p}& = \sup \left\{ \left|\int_\Omega f_n g\di P\right| :
g \mbox{ a simple $\Sigma_n$-measurable function with }\|g\|_{L^q}\le 1\right\},
\end{aligned}$$
where the second formula holds for $1<p\le\infty$ and $q$ is the dual exponent to $p$.

If $1<p<\infty$, then $M_p=M_p^u=L^p(\Sigma_\infty)$ by \cite[Theorem 1.3.2.13 on p. 27 and the following remark]{spaces}. More precisely, in this case any $L^p$-bounded martingale is uniformly integrable. Moreover, if $\f=(f_n)$ is such a martingale, then the sequence $(f_n)$ converges to $f_\infty$ in the $L^p$-norm. In particular $\|\f\|_p=\|f_\infty\|_{L^p}$. Conversely, if $f\in L^p(\Sigma_\infty)$, then the sequence $(f_n)$ defined by $f_n=E(f|\Sigma_n)$ is an $L^p$-bounded martingale with $f_\infty=f$. It follows that the equality $M_p=L^p(\Sigma_\infty)$ is a canonical isometric identification.

For $p=1$ the situation is more complicated. Firstly, similarly as for $1<p<\infty$ we have a canonical isometric identification $M_1^u=L^1(\Sigma_\infty)$ (since $\|\f\|_1=\|f_\infty\|_{L^1}$ for any uniformly integrable $L^1$-bounded martingale).
However, in general $M_1^u\subsetneqq M_1$. There are examples in the literature and we will see some examples below (see, e.g., the proof of Lemma~\ref{l:m1s} below or Proposition~\ref{p:residual} below). Let us denote by $M_1^s$ the subspace of $M_1$ formed by martingales converging almost surely to zero. Then $M_1$ is the direct sum of $M_1^u$ and $M_1^s$ and the canonical projection to $M_1^u$ has norm one.
Indeed, let $\f\in M_1$. Set $\f^u=(E(f_\infty|\Sigma_n))$ and $\f^s=(f_n-E(f_\infty|\Sigma_n))$. Then $\f^u\in M_1^u$, $\f^s\in M_1^s$, $\f=\f^u+\f^s$ and $\|\f^u\|_1=\|f_\infty\|_{L_1}\le \|\f\|_1$.

Finally, let us look at the case $p=\infty$. We have again $M_\infty=M_\infty^u$. Indeed, any $L^\infty$-bounded martingale is also, say, $L^2$-bounded, and hence uniformly integrable. Moreover, also in this case we have a canonical isometric identification $M_\infty=L^\infty(\Sigma_\infty)$. Indeed, let $\f\in M_\infty$.
Then by the above $\|f_\infty\|_{L^\infty}\le \|\f\|_\infty$. Moreover, since $\f$ is uniformly integrable  we have $f_n=E(f_\infty|\Sigma_n)$ for each $n\in\en$.
Therefore $\|f_n\|_{L^\infty}\le \|f_\infty\|_{L^\infty}$ by \cite[243J(b)]{FR2}.
It follows that $\|f_\infty\|_{L^\infty}=\|\f\|_\infty$. Together with the fact that for any $f\in L^\infty(\Sigma_\infty)$ the sequence $\f=(E(f|\Sigma_n))$ is an $L^\infty$-bounded martingale with $f_\infty=f$ we get the announced isometric identification.

In addition to the norm, we will consider one more topology on the above defined spaces $M_p$ -- the topology of pointwise convergence. More precisely, we can equip $M_p$ by the product topology inherited from $\prod_{n\in\en} L^p(\Sigma_n)$.
We will consider only martingales of norm at most one, i.e., the spaces
$$M_{p,1}=\{\f\in M_p: \|\f\|_p\le 1\}.$$
Then $M_{p,1}$ is a closed subset of the product $\prod_{n\in\en} L^p(\Sigma_n)$, hence it is a completely metrizable space.

\section{Compact space associated to a filtration of finite $\sigma$-algebras}\label{sec:comp}

Let us suppose that the filtration $(\Sigma_n)$ is formed by finite $\sigma$-algebras. Then for each $n\in\en$ there is $\D_n$, a finite partition of $\Omega$ generating $\Sigma_n$. Since the sequence $(\Sigma_n)$ is increasing, $\D_m$ refines $\D_n$ for $m\ge n$. Therefore there is, given $m\ge n$, a unique mapping $\varphi_{m,n}:\D_m\to\D_n$ such that $D\subset\varphi_{m,n}(D)$ for each $D\in\D_m$.
Then the sequence $(\D_n)$ together with the just defined mappings form an inverse sequence. Let us equip each $\D_n$ with the discrete topology and let $K$ be the inverse limit of this inverse sequence. I.e., $K$ can be represented as
$$K=\left\{(D_n)\in\prod_{n\in\en}\D_n: D_m\subset D_n\mbox{ for }m\ge n\right\}.$$
Then $K$ is a zerodimensional metrizable compact space. Let us denote by $\varphi_n$ the canonical projection of $K$ onto $\D_n$. Further, let us define the mapping $\psi:\Omega\to K$ by
$$\psi(\omega)=(D_n)\mbox{ where }\omega\in D_n\in\D_n\mbox{ for }n\in\en.$$
Then $\psi$ is a $\Sigma_\infty$-measurable mapping, i.e., $\psi^{-1}(A)\in\Sigma_\infty$ for each $A\subset K$ open (hence also for $A\subset K$ Borel). Hence, we can define $\tilde P=\psi(P|_{\Sigma_\infty})$, the image of $P$ under $\psi$. Then $\tilde P$ is a Borel probability on $K$.

In the following proposition we collect some basic properties of the compact space $K$ and the probability $\tilde P$. The first and third assertions follow immediately from definitions, the second one follows from the well-known characterization of Cantor set (see, e.g., \cite[Theorem 7.4]{kechris}).

\begin{prop} \
\begin{itemize}
	\item $K$ has an isolated point if and only if there are $n\in \en$ and $D\in\D_n$ such that $D\in\D_m$ for each $m\ge n$.
	\item If $K$ has no isolated points, then $K$ is homeomorphic to the Cantor set.
	\item The support of $\tilde P$ equals $K$ if and only if $P(D)>0$ for each $D\in\bigcup_n\D_n$.
\end{itemize}
\end{prop}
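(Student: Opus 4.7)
The plan is to verify the three bullets directly from the construction of $K$ and $\tilde P$, using the standard clopen basis of the inverse limit.

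For the first assertion, I would begin by noting that the collection $\{\varphi_n^{-1}(D) : n\in\en,\ D\in\D_n\}$ is a basis of clopen sets for the topology of $K$, and that a point $x=(D_n)\in K$ is isolated if and only if some basic neighbourhood $\varphi_n^{-1}(D_n)$ reduces to $\{x\}$. Unwinding the inverse-limit description,
\[
\varphi_n^{-1}(D_n)=\bigl\{(E_m)\in K : E_n=D_n\bigr\},
\]
and this set is a singleton precisely when for every $m\ge n$ there is exactly one member of $\D_m$ contained in $D_n$; since $\D_m$ partitions $D_n$, this happens iff $D_n\in\D_m$ for all $m\ge n$. Combining both implications gives the stated equivalence.

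For the second assertion, observe first that $K$ is nonempty, because it is the inverse limit of a sequence of nonempty finite (hence compact Hausdorff) spaces with surjective bonding maps $\varphi_{m,n}$. We already know $K$ is compact, metrizable and zero-dimensional by construction, and the hypothesis says it has no isolated points, i.e.\ it is perfect. The classical topological characterization of the Cantor set (\cite[Theorem 7.4]{kechris}) then yields that $K$ is homeomorphic to $\{0,1\}^{\en}$.

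For the third assertion, I would compute $\tilde P$ on the basic clopen sets. By the definition of $\psi$ we have $\psi^{-1}(\varphi_n^{-1}(D))=D$ for every $D\in\D_n$, so
\[
\tilde P(\varphi_n^{-1}(D))=P(\psi^{-1}(\varphi_n^{-1}(D)))=P(D).
\]
Since the sets $\varphi_n^{-1}(D)$ form a base of the topology of $K$, the support of $\tilde P$ equals $K$ iff $\tilde P(\varphi_n^{-1}(D))>0$ for every such basic set, which is exactly the condition $P(D)>0$ for each $D\in\bigcup_n\D_n$.

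None of the three items poses a genuine obstacle; the only step demanding a little care is the characterization of isolated points in item one, where one must translate the singleton condition for a basic neighbourhood into the stabilization statement $D\in\D_m$ for all $m\ge n$. Everything else is bookkeeping together with the cited Cantor set characterization.
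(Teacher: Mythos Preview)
Your argument is correct and follows exactly the route indicated in the paper: the first and third bullets are unwound directly from the definitions of $K$, $\psi$, and the clopen basis $\{\varphi_n^{-1}(D)\}$, and the second bullet is reduced to the classical topological characterization of the Cantor set cited there. You have simply supplied the details that the paper omits.
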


In the sequel we will always assume some properties of the sequence $(\D_n)$ which we now sum up:

\begin{assumption}\label{a:a} \
\begin{itemize}
	\item $P(D)>0$ for each $D\in\D_n$ and $n\in\en$.
	\item For each $n\in\en$ and $D\in\D_n$ there is $m>n$ such that $D\notin\D_m$.
\end{itemize}
\end{assumption}

Under these assumptions the compact space $K$ has no isolated points and the support of $\tilde P$ equals $K$.
Moreover, martingales adapted to the filtration $(\Sigma_n)$ are in a canonical one-to-one correspondence with sequences $(h_n)$ with the following properties:
\begin{itemize}
	\item $h_n:\D_n\to \er$ is a mapping.
	\item For each $n\in\en$ and each $D\in\D_n$ we have
	$$h_n(D)=\frac1{P(D)}\sum \{ P(D')h_{n+1}(D') : D'\in\D_{n+1}, D'\subset D\}.$$
\end{itemize}

Indeed, if $\f$ is a martingale adapted to the filtration $(\Sigma_n)$, then for each $n\in\en$ the function $f_n$ is constant on each element of $\D_n$. Hence, we can define $h_n(D)$ to be the value of $f_n$ on $D$ for each $D\in\D_n$. Conversely,
having a sequence $(h_n)$ with the above properties, define $f_n$ to be the function with domain $\Omega$ which has value $h_n(D)$ at each point of $D$ for $D\in\D_n$.

Further, martingales adapted to $(\Sigma_n)$ can be canonically identified with certain martingales on $(K,\tilde P)$ adapted to the filtration $(\B_n)$, where
$$\B_n=\{\varphi_n^{-1}(A): A\subset\D_n\}, \qquad n\in\en.$$
Indeed, $\B_n$ is a finite $\sigma$-algebra of clopen subsets of $K$, the sequence $(\B_n)$ is increasing
and its union generates the Borel $\sigma$-algebra of $K$. If $\f=(f_n)$ is a martingale adapted to $(\Sigma_n)$, we define a martingale $\g=(g_n)$ adapted to $(\B_n)$ as follows.
$$g_n((D_k)_{k=1}^\infty)=h_n(D_n),\qquad (D_k)_{k=1}^\infty\in K, n\in\en,$$
where $(h_n)$ is the above defined sequence of mappings. Conversely, any martingale adapted to $(\B_n)$ can be (uniquely)
represented in such a way  (if $(g_n)$ is a martingale adapted to $(\B_n)$, then $(g_n\circ\psi)$ is the corresponding martingale adapted to $(\Sigma_n)$). It is clear that $(g_n)$ is uniformly integrable if and only if $(f_n)$ is uniformly integrable and that $\|\f\|_p=\|\g\|_p$ for $p\in[1,\infty]$.

\begin{convention}\label{c:c} Let $\f=(f_n)$ be a martingale adapted to $(\Sigma_n)$ and let $(h_n)$ be the sequence of mappings defined above and $\g=(g_n)$ be the corresponding martingale adapted to $(\B_n)$. We will identify  $f_n$, $h_n$ and $g_n$.
More specifically:
\begin{itemize}
	\item For $D\in\D_n$ we will use $f_n(D)$ in place of $h_n(D)$.
	\item For $x\in K$ we will use $f_n(x)$ in place of $g_n(x)$.
\end{itemize}
\end{convention}

\section{Main results}

In this section we formulate our main results, their proofs are given in Section~\ref{s:pf} below.
The basic setting of all the results is the following.
$(\Omega,\Sigma,P)$ is a probability space, $(\Sigma_n)$ is a filtration of finite $\sigma$-algebras,
the $\sigma$-algebra $\Sigma_n$ is generated by a finite partition $\D_n$. We suppose that Assumptions~\ref{a:a} are valid.
Let $K$ be the associated compact metrizable space. In the formulations we use Convention~\ref{c:c}.

The first result concerns spaces $M_p$ for $p\in[1,\infty)$ equipped with the norm topology. We recall that for $p\in(1,\infty)$ we have $M_p=M_p^u$. For $p=1$ the result holds for all the three spaces -- $M_1$, $M_1^u$ and $M_1^s$.
The theorem says, in particular, that, for any of these spaces there is a comeager set of martingales which diverge
on a comeager set of $K$. Moreover, the divergence is the strongest possible -- limsup is $+\infty$ and liminf is $-\infty$.
This should be compared to Doob's theorem which says that any such martingales converges almost surely. In case of $M_1^s$ it even converges almost surely to zero.

\begin{thm}\label{main:pn} Let $Y=M_1$, $Y=M_1^u$, $Y=M_1^s$ or $Y=M_p$ for some $p\in(1,\infty)$. Then the set
$$\{(\f,x)\in Y\times K: \limsup f_n(x)=+\infty\mbox{ and }\liminf f_n(x)=-\infty\}$$
is a dense $G_\delta$ subset of $Y\times K$. In particular, for all $\f$ from a dense $G_\delta$ subset of $Y$
$$\limsup f_n(x)=+\infty\mbox{ and }\liminf f_n(x)=-\infty$$
for $x$ from a dense $G_\delta$ subset of $K$.
\end{thm}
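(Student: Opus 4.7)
The plan is to write $A$ as the intersection of countably many open sets and then apply the Baire category theorem together with Kuratowski--Ulam. Concretely, setting $U_N^\pm=\bigcup_{n\in\en}\{(\f,x)\in Y\times K:\pm f_n(x)>N\}$ one has $A=\bigcap_{N\in\en}(U_N^+\cap U_N^-)$, and each $U_N^\pm$ is open because the evaluation $(\f,x)\mapsto f_n(x)$ is jointly continuous: convergence of $\f$ in the $Y$-norm forces convergence of the coordinate $f_n$ in the finite-dimensional space $L^p(\Sigma_n)$ and hence uniformly on $K$, while $f_n$ is locally constant on $K$. This makes $A$ a $G_\delta$, reduces the density of $A$ to the density of each $U_N^\pm$, and the ``in particular'' statement then follows from the Kuratowski--Ulam theorem applied to the product $Y\times K$.

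To show $U_N^+$ is dense (the case of $U_N^-$ is symmetric), fix $(\f,x)\in Y\times K$, $\varepsilon>0$, and an open neighbourhood $V$ of $x$. The aim is a ``spike'' $\f^{(k)}\in Y$ of $Y$-norm $<\varepsilon$ for which $\f+\f^{(k)}$ takes some value exceeding $N$ on $V$. By Assumption~\ref{a:a} I pick a level $n_1$ and a proper refinement $D_1\subsetneq D_0$ with $D_0\in\D_{n_1-1}$, $D_1\in\D_{n_1}$, $D_1\subset V$, and $P(D_1)$ as small as needed; setting $a:=N+1-f_{n_1}(D_1)$ and $b:=-aP(D_1)/(P(D_0)-P(D_1))$, I define $f^{(k)}_j:=0$ for $j<n_1$ and
$$f^{(k)}_{n_1}:=a\,\mathbf{1}_{D_1}+b\,\mathbf{1}_{D_0\setminus D_1},$$
so that $\int_{D_0}f^{(k)}_{n_1}\di P=0$ and the martingale property is respected across the jump at $n_1$. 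Then $(\f+\f^{(k)})_{n_1}(x')=N+1>N$ for every $x'\in D_1\subset V$.

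The $Y$-norm of this spike is controlled by
$$\|f^{(k)}_{n_1}\|_{L^p}\le C|a|P(D_1)^{1/p}\le C(N+1)P(D_1)^{1/p}+C\|f_{n_1}\mathbf{1}_{D_1}\|_{L^p},$$
valid for any $1\le p<\infty$ with an absolute constant $C$; the first summand is small by taking $P(D_1)$ small, and the second by absolute continuity of $\int|f_{n_1}|^p\di P$ applied to the fixed function $f_{n_1}$. For $Y=M_1$, $Y=M_1^u$, or $Y=M_p$ with $p\in(1,\infty)$ I extend $\f^{(k)}$ past $n_1$ as the eventually constant martingale $f^{(k)}_j:=f^{(k)}_{n_1}$ for $j\ge n_1$; this is plainly $L^p$-bounded and uniformly integrable, and its $Y$-norm equals $\|f^{(k)}_{n_1}\|_{L^p}$, which we have arranged to be less than $\varepsilon$.

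The main obstacle is the case $Y=M_1^s$, because then the spike itself must converge almost surely to zero. To achieve this I plan to extend $\f^{(k)}$ past $n_1$ by a ``decaying'' scheme carried out independently on $D_1$ and on each $\Sigma_{n_1}$-atom of $D_0\setminus D_1$: whenever such an atom refines at a later level, which happens eventually by Assumption~\ref{a:a}, the current value is transferred to one daughter atom, rescaled by the reciprocal of the corresponding measure ratio so as to preserve the conditional expectation, and set to $0$ on the remaining daughters. Each such step preserves the $L^1$-contribution of the atom while strictly shrinking the support of its non-zero part, so iterating yields $f^{(k)}_j\to 0$ almost everywhere and keeps $\|\f^{(k)}\|_1$ equal to the previous bound $2|a|P(D_1)$ for every $j\ge n_1$. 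Since $M_1^s$ is a linear subspace of $M_1$, the sum $\f+\f^{(k)}$ still lies in $M_1^s$, completing the density argument in this last case.
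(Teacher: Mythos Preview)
Your $G_\delta$ reduction and the use of Kuratowski--Ulam are correct and match the paper. The spike idea also works for $Y=M_1^u$ and $Y=M_p$ with $1<p<\infty$, though your justification of the second summand is imprecise: you cannot treat $f_{n_1}$ as a ``fixed function'', since $\D_{n_1}$ is a fixed finite partition and shrinking $P(D_1)$ forces $n_1\to\infty$. What actually saves you in these cases is uniform integrability of the whole family $\{f_n\}$ (equivalently, Jensen's inequality applied to $f_n=E(f_\infty\,|\,\Sigma_n)$), which gives $\int_{D_1}|f_{n_1}|^p\di P\le\int_{D_1}|f_\infty|^p\di P\to0$ as $P(D_1)\to0$. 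With that correction your argument is a clean alternative to the paper's route via Lemma~\ref{l:ui} and density of $M_\infty$.

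For $Y=M_1$ and $Y=M_1^s$ there is a genuine gap: no uniform integrability is available, and your norm bound can fail. Take the dyadic filtration on $\{0,1\}^{\en}$ and the martingale $f_n=2^n\mathbf 1_{[0^n]}\in M_1^s$. If $V$ is the cylinder $[0]$ and you choose $D_1=[0^{n_1}]$, then $|f_{n_1}(D_1)|P(D_1)=1$ for every $n_1$, so $\|\f^{(k)}\|_1=2|a|P(D_1)\ge2$ regardless of how small $P(D_1)$ is. You could of course pick another atom inside $V$, but your proof gives no mechanism for doing so, and for a general $\f\in M_1$ this selection is exactly the nontrivial point. The paper resolves it by first descending into an atom of small measure and \emph{then} following an almost-surely convergent path so that $f_{n_1}(D_1)$ is bounded before the spike is placed (Lemma~\ref{l:m1s} for $M_1^s$); for $M_1$ a further case split on whether the atom measures along the path tend to zero is needed (Lemma~\ref{l:klicove}), using the decomposition $\f=\f^u+\f^s$.

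A smaller issue in your $M_1^s$ extension: transferring all the mass to ``one daughter atom'' preserves $\|\cdot\|_1$ and shrinks the support, but if at each step you pick the larger daughter the supports can intersect in a set of positive measure, on which the values converge to a nonzero constant. You must pick a daughter of measure at most half the parent's, as in the proof of Lemma~\ref{l:m1s}, to force a.e.\ convergence to zero.
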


The next theorem contains the same result for spaces $M_{p,1}$, $p\in[1,\infty)$, equipped with the pointwise convergence topology. Also in this case we get the strongest possible divergence of a comeager set of martingales on a comeager set of points.

\begin{thm}\label{main:pp} Let $p\in[1,\infty)$ be arbitrary. Then the set
$$\{(\f,x)\in M_{p,1}\times K: \limsup f_n(x)=+\infty\mbox{ and }\liminf f_n(x)=-\infty\}$$
is a dense $G_\delta$ subset of $M_{p,1}\times K$. In particular, for all $\f$ from a dense $G_\delta$ subset of $M_{p,1}$
$$\limsup f_n(x)=+\infty\mbox{ and }\liminf f_n(x)=-\infty$$
for $x$ from a dense $G_\delta$ subset of $K$.
\end{thm}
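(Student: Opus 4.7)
The plan is to show that the set, call it $G$, is a dense $G_\delta$ in the Polish space $M_{p,1} \times K$; the section statement then follows from Kuratowski-Ulam. For the $G_\delta$ structure I would write
\[
G = \bigcap_{N, n \in \en} \bigl(O^+_{N,n} \cap O^-_{N,n}\bigr), \qquad O^\pm_{N,n} = \bigcup_{m \ge n} \{(\f, x) : \pm f_m(x) > N\}.
\]
Each set in the inner union is open because $(\f, x) \mapsto f_m(x)$ is jointly continuous on $M_{p,1} \times K$: pointwise convergence $\f^{(k)} \to \f$ forces the cell values at level $m$ to converge (as $L^p(\Sigma_m)$ is finite-dimensional), while $x^{(k)} \to x$ in $K$ means $\varphi_m(x^{(k)}) = \varphi_m(x)$ eventually.

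The substantive task is density of each $O^\pm_{N,n}$. Fix a basic neighborhood $U \times V$ with $U = \{\f : \|f_i - f_i^*\|_{L^p} < \ep,\ i \le n_0\}$ and $V \subset K$ nonempty open. I first replace $\f^*$ by the shrunk stopped martingale $\f^0 = (1-\eta)\tilde{\f}$, where $\tilde{f}_j = f_j^*$ for $j \le n_0$ and $\tilde{f}_j = f_{n_0}^*$ for $j > n_0$. For $\eta < \ep$ this keeps $\f^0 \in U$, gives $\|\f^0\|_p \le 1-\eta$, and provides the $j$-uniform pointwise bound $|f_j^0(x)| \le M_0 := \max_{i \le n_0} \|f_i^*\|_\infty < \infty$. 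Next, pick $x_0 \in V$ and $k > n_0$ with $\varphi_k^{-1}(\varphi_k(x_0)) \subset V$; set $D^* = \varphi_k(x_0)$. Iterating Assumption~\ref{a:a} inside $D^*$ and retaining the smaller piece at each split yields nested subcells whose $P$-measure is halved each step, so one can select $m \ge \max(k+1, n)$ and $E \in \D_m$ with $E \subset D^*$ and $P(E)$ as small as wished.

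Put $N' = N + M_0 + 1$ and define a martingale $\g$ by
\[
g_m = N' \mathbf{1}_E - c \mathbf{1}_{D^* \setminus E}, \qquad c = \frac{N' P(E)}{P(D^* \setminus E)},
\]
with $g_j = E(g_m \mid \Sigma_j)$ for $j \le m$ and $g_j = g_m$ for $j > m$. Since $\int_{D^*} g_m = 0$ and $g_m$ vanishes off $D^*$, a direct conditional-expectation computation gives $g_k \equiv 0$ and hence $g_j \equiv 0$ for every $j \le k$, in particular for $j \le n_0$. A short calculation yields $\|g_m\|_{L^p}^p \le 2(N')^p P(E)$ once $P(E) \le P(D^*)/2$, so taking $P(E)$ small enough forces $\|\g\|_p = \|g_m\|_{L^p} < \eta$. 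Then $\f := \f^0 + \g$ lies in $M_{p,1}$ and in $U$, and for any $x \in K$ with $\varphi_m(x) = E$ (automatically $x \in V$) we have $f_m(x) = f_m^0(x) + N' \ge -M_0 + N + M_0 + 1 > N$. Replacing $\g$ by $-\g$ yields density of $O^-_{N,n}$; Baire's theorem in $M_{p,1} \times K$ then gives the first assertion, and Kuratowski-Ulam gives the second (for comeager $\f$ every $(G_n)_\f$ is dense open, so $G_\f = \bigcap_n (G_n)_\f$ is dense $G_\delta$ in $K$; the comeager set of such $\f$ contains a dense $G_\delta$ in $M_{p,1}$).

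The main obstacle is this perturbation step: controlling $\|\g\|_p$ while forcing $f_m(x) > N$. A naive perturbation of $\f^*$ itself fails because $|f_m^*(E)|$ can reach $P(E)^{-1/p}$ (the extreme allowed by $\|\f^*\|_p \le 1$), which would demand $N' \gtrsim P(E)^{-1/p}$ and hence $\|g_m\|_{L^p} \gtrsim 1$, an obstruction independent of $P(E)$. Stopping $\f^*$ at level $n_0$ and shrinking by $1-\eta$ replaces the $j$-dependent sup-norm by the single constant $M_0$, which is exactly what lets $\|\g\|_p$ tend to zero with $P(E)$.
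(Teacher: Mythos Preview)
Your argument is correct and shares the paper's overall skeleton: the $G_\delta$ structure via Lemmas~\ref{l:lsc} and~\ref{l:lsc-na-MK}, the ``stop and shrink'' replacement $\f^0=(1-\eta)\tilde\f$ of the given $\f^*$ to create room inside the unit ball, and then the addition of a small perturbation. The genuine difference is in that last step. The paper invokes Lemma~\ref{l:ui}, which builds once and for all an $L^p$-bounded martingale $\f$ with $\lim f_n=+\infty$ on a dense subset of $K$ and $\lim f_n=-\infty$ on another dense subset; adding a small multiple $\eta\f$ to the stopped-and-shrunk $\f^1$ then yields, in one stroke, a martingale in the given neighborhood whose limsup is $+\infty$ and liminf is $-\infty$ on dense sets, so the full sets $A$ and $B$ are seen to be dense directly. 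You instead construct, for each pair $(N,n)$ and each basic open set, an ad~hoc two-valued ``spike'' $\g$ supported on a tiny cell $E\subset D^*$ and its complement in $D^*$, and establish density of each individual open set $O^\pm_{N,n}$; Baire then assembles the conclusion. Your route is more elementary---the spike is a two-line construction, whereas Lemma~\ref{l:ui} is a fairly intricate inductive build with an enumeration of all cells---at the cost of handling each $O^\pm_{N,n}$ separately. The paper's investment in Lemma~\ref{l:ui} is recouped, however, since the same lemma also drives the proof of Theorem~\ref{main:pn} for $Y=M_1^u$ and $Y=M_p$, $p\in(1,\infty)$.
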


In case $p=1$ we can say even more:

\begin{thm}\label{main:1p}
There is a comeager subset  $A\subset M_{1,1}$ such that for each $\f\in A$ the following hold.
\begin{itemize}
	\item $\f\in M_1^s$, i.e., $f_n\to 0$ almost surely.
	\item $\{x\in K : \limsup f_n(x)=+\infty\mbox{ and }\liminf f_n(x)=-\infty\}$ is a dense $G_\delta$ subset of $K$.
\end{itemize}
\end{thm}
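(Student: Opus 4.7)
The plan is to combine Theorem~\ref{main:pp} (for $p=1$) with a separate argument producing a dense $G_\delta$ subset of $M_{1,1}$ contained in $M_1^s$. By Theorem~\ref{main:pp} with $p=1$, there is a dense $G_\delta$ set $A_1\subset M_{1,1}$ such that for every $\f\in A_1$ the set $\{x\in K:\limsup f_n(x)=+\infty\text{ and }\liminf f_n(x)=-\infty\}$ is a dense $G_\delta$ in $K$. It therefore suffices to produce a dense $G_\delta$ set $V\subset M_{1,1}$ with $V\subset M_1^s$; the intersection $A:=A_1\cap V$ is then the desired comeager subset.

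For each $n_0\in\en$ I would set
\[
V_{n_0}=\bigcup_{n\ge n_0}\bigl\{\f\in M_{1,1}: P(|f_n|<1/n_0)>1-1/n_0\bigr\}.
\]
Each inner set is open in the topology of pointwise convergence, because the map $\f\mapsto P(|f_n|<c)$ is lower semicontinuous: when $\f^{(j)}\to\f$ pointwise, $f_n^{(j)}\to f_n$ in $L^1$ and hence in probability, and a standard Portmanteau-type estimate gives $\liminf_j P(|f_n^{(j)}|<c)\ge P(|f_n|<c)$. So $V_{n_0}$ is open. For density, given a basic neighborhood $\{\g:\|g_n-f_{0,n}\|_{L^1}<\varepsilon_n,\,n\le m_0\}$ of some $\f_0\in M_{1,1}$ (with $m_0\ge n_0$), I construct a \emph{spike} martingale $\tilde\f\in M_{1,1}$ agreeing with $\f_0$ on the first $m_0$ levels. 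On each $D\in\D_{m_0}$ with $c_D:=f_{0,m_0}(D)\ne 0$, Assumption~\ref{a:a} makes the clopen subset of $K$ representing $D$ a nonempty perfect (hence uncountable) compact metric space, so it contains a point $y$ at which $P$ has no atom; along the descending chain $D=D^{(0)}\supsetneq D^{(1)}\supsetneq\cdots$ of atoms from $(\D_{m_j})_{j\ge 1}$ containing $y$ (with $m_j\nearrow\infty$, coordinated across $D$'s) one has $P(D^{(j)})\to 0$. I then set $\tilde f_{m_j}|_D=c_D\tfrac{P(D)}{P(D^{(j)})}\mathbf{1}_{D^{(j)}}$ and fill in the intermediate and subsequent levels by conditional expectations. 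This yields $\|\tilde\f\|_1=\|f_{0,m_0}\|_{L^1}\le 1$, and for $j$ large enough that the union of spike supports has total $P$-measure below $1/n_0$ one gets $\tilde\f\in V_{n_0}$.

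Setting $V:=\bigcap_{n_0}V_{n_0}$ gives a dense $G_\delta$ in $M_{1,1}$. To see $V\subset M_1^s$: if $\f\in V$ then for each $n_0$ there is $k(n_0)\ge n_0$ with $P(|f_{k(n_0)}|\ge 1/n_0)<1/n_0$, so $f_{k(n_0)}\to 0$ in probability as $n_0\to\infty$. Proposition~\ref{p:zaklad} ensures that $(f_n)$ also converges a.s., and hence in probability, to some $f_\infty$; comparing the two probability limits forces $f_\infty=0$ a.s., that is $\f\in M_1^s$. Then $A:=A_1\cap V$ witnesses both conclusions. The main technical obstacle is the spike construction used to establish density of $V_{n_0}$: one must produce inside each $D\in\D_{m_0}$ a shrinking chain of atoms with $P$-measure tending to zero, and this rests on the fact that the associated clopen subset of $K$ is perfect and uncountable, so its countably many $P$-atoms do not exhaust it and a $P$-null point with a suitable chain can always be selected.
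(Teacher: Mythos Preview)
Your argument is correct, and the overall architecture---combine Theorem~\ref{main:pp} for $p=1$ with a separate proof that $M_1^s\cap M_{1,1}$ is comeager in $M_{1,1}$---is exactly the paper's. The difference lies in how comeagerness is established. The paper proves this as Proposition~\ref{p:residual} via the Banach--Mazur game: player~II responds to each basic open set by pushing all the $L^1$-mass onto a set of small measure (forcing $\|g^k_{q_k}\|_{L^1}=1$), and a three-step maximal-type estimate then shows that any martingale in the intersection of the plays lies in $M_1^s$. You instead write down an explicit dense $G_\delta$ set $V=\bigcap_{n_0}V_{n_0}$ and verify $V\subset M_1^s$; the decisive observation making your inclusion work is that Doob's theorem already supplies a.s.\ convergence of $(f_n)$, so a single subsequence tending to $0$ in probability forces $f_\infty=0$.

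Your route is the more elementary one. In fact the paper itself remarks, immediately after stating Theorem~\ref{main:1p}, that if one knew $M_{1,1}\cap M_1^s$ to be $G_\delta$ then the proof of Proposition~\ref{p:residual} ``would be much simpler,'' and then poses this descriptive question as open. You do not answer that question---your $V$ is not $M_{1,1}\cap M_1^s$ itself---but you sidestep it by exhibiting a \emph{smaller} dense $G_\delta$ set, which is all comeagerness requires. The Banach--Mazur argument, by contrast, pays for never writing the residual set down explicitly with a more intricate verification. One minor simplification available to you: the detour through $\tilde P$-atoms of $K$ to locate chains $D^{(j)}$ with $P(D^{(j)})\to 0$ is correct but unnecessary. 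Under Assumptions~\ref{a:a} every atom of $\D_n$ eventually splits, and repeatedly passing to a sub-atom of at most half the measure yields the shrinking chain directly; this is how the paper argues in analogous places (e.g., the proof of Lemma~\ref{l:m1s}).
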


This theorem follows immediately from Theorem~\ref{main:pp} and Proposition~\ref{p:residual} below. By the quoted proposition, $M_{1,1}\cap M_1^s$ is comeager in $M_{1,1}$. It follows that $M_{1,1}\cap M_1^u$ is meager in $M_{1,1}$. Since $M_{1,1}\cap M_1^u$ is clearly dense in $M_{1,1}$, it is meager in itself, so there is no point in studying typical martingales from  $M_{1,1}\cap M_1^u$. It is also natural to ask about the descriptive quality of these subsets -- it is easy to check that $M_{1,1}\cap M_1^u$ is $F_{\sigma\delta}$ in $M_{1,1}$ (this follows from the characterization in Proposition~\ref{p:zaklad}) but we do not know what is the descriptive quality of $M_{1,1}\cap M_1^s$. If we knew it is $G_\delta$, the proof of
Proposition~\ref{p:residual} would be much simpler. Let us formulate this as a question.

\begin{question} Is $M_{1,1}\cap M_1^s$ a $G_\delta$ subset of $M_{1,1}$? Is it at least Borel?
\end{question}

The case $p=\infty$ is different. An $L_\infty$-bounded martingale is under our assumptions just uniformly bounded.
So, the respective sequence of functions is bounded at each point, hence it cannot have infinite limsup or liminf at any point.
However, for $M_{\infty,1}$ equipped with the pointwise convergence topology we get a result similar to the case $p<\infty$.

\begin{thm}\label{main:ip} The set
$$\{(\f,x)\in M_{\infty,1}\times K: \limsup f_n(x)=1\mbox{ and }\liminf f_n(x)=-1\}$$
is a dense $G_\delta$ subset of $M_{\infty,1}\times K$. In particular, for all $\f$ from a dense $G_\delta$ subset of $M_{\infty,1}$
$$\limsup f_n(x)=1\mbox{ and }\liminf f_n(x)=-1$$
for $x$ from a dense $G_\delta$ subset of $K$.
\end{thm}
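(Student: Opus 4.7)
The plan is to establish the dense $G_\delta$ statement in three stages: identifying the $G_\delta$ structure, reducing density to that of countably many open sets via Baire category, and carrying out a direct construction using the identification $M_\infty=L^\infty(\Sigma_\infty)$. For any fixed $n$ the evaluation $(\f,x)\mapsto f_n(x)$ is continuous on $M_{\infty,1}\times K$, since it depends only on the finite-dimensional coordinate $f_n\in L^\infty(\D_n)$ of $\f$ and on the atom $\varphi_n(x)\in\D_n$. Hence the set in the theorem equals
\begin{equation*}
\bigcap_{N,k\in\en}\Bigl(\bigcup_{n\ge N}\{(\f,x):f_n(x)>1-\tfrac1k\}\ \cap\ \bigcup_{n\ge N}\{(\f,x):f_n(x)<-1+\tfrac1k\}\Bigr),
\end{equation*}
a $G_\delta$ subset of the Polish space $M_{\infty,1}\times K$. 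By Baire's theorem, it suffices to show each of the open sets $A_{N,k}^{\pm}=\{(\f,x):\exists n\ge N,\ \pm f_n(x)>1-1/k\}$ is dense; the ``in particular'' statement then follows from Kuratowski--Ulam, as in Theorems~\ref{main:pn} and~\ref{main:pp}.

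For density of $A_{N,k}^+$, I consider a basic neighbourhood $U\times V$ with $V=\varphi_m^{-1}(D_0)$ for some $D_0\in\D_m$ (WLOG $m\ge n_0$) and $U=\{\g:\|g_j-f^0_j\|_{L^\infty}<\ep,\ j\le n_0\}$. Let $D_0^{n_0}\in\D_{n_0}$ be the atom containing $D_0$ and $c=f^0_{n_0}(D_0^{n_0})\in[-1,1]$. If $c=1$, the $L^\infty$-bound together with the martingale property forces $f^0_n=1$ on $D_0^{n_0}$ for every $n\ge n_0$, so $(\f^0,x)$ witnesses density for any $x\in V$. Otherwise, set $c'=c$ if $c>-1$ and $c'=-1+\ep/2$ if $c=-1$, so $c'\in(-1,1)$ and $|c'-c|\le\ep/2$. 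Using the identification $M_\infty=L^\infty(\Sigma_\infty)$, I prescribe $f_\infty\in L^\infty(\Sigma_\infty)$ to equal $f^0_\infty$ outside $D_0^{n_0}$, to equal $1$ on a small atom $D^*\in\D_\ell$ with $D^*\subset D_0$ and $\ell\ge N$, and to equal the constant $v=(c'-p^*)/(1-p^*)$ on $D_0^{n_0}\setminus D^*$, where $p^*=P(D^*)/P(D_0^{n_0})$; this ensures the average of $f_\infty$ over $D_0^{n_0}$ equals $c'$. Provided $p^*\le(c'+1)/2$, the constant $v$ lies in $[-1,1]$, whence $\|f_\infty\|_{L^\infty}\le 1$ and $\f:=(E(f_\infty|\Sigma_n))_n\in M_{\infty,1}$. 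Since $f_\infty$ is $\Sigma_\ell$-measurable on $D_0^{n_0}$, one has $f_\ell=f_\infty=1$ on $D^*$, so $f_\ell(x)=1>1-1/k$ for any $x\in D^*\subset V$. A short conditional-average computation gives $\|f_j-f^0_j\|_{L^\infty}\le|c'-c|\le\ep/2<\ep$ for $j\le n_0$, hence $\f\in U$. The case $A_{N,k}^-$ is entirely symmetric (trivial when $c=-1$; otherwise put $f_\infty=-1$ on $D^*$ and adjust the balancing constant accordingly).

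The small atoms used above are supplied by the following lemma: for every $D\in\bigcup_n\D_n$ and every $\delta>0$ there exist arbitrarily large $\ell$ and an atom $D^*\in\D_\ell$ with $D^*\subset D$ and $P(D^*)<\delta$. Indeed, by Assumption~\ref{a:a} each $\D_\ell$-atom inside $D$ is properly split at some later level, so the number of $\D_\ell$-atoms contained in $D$ is nondecreasing in $\ell$ and tends to infinity; since these atoms partition $D$, the smallest has measure at most $P(D)$ divided by their number, which goes to $0$. Applied with $\delta=\tfrac12(c'+1)P(D_0^{n_0})$ (positive because $c'>-1$) and $\ell\ge N$, this delivers the required $D^*$.

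The main technical obstacle is producing a martingale $\f\in U$ attaining the extreme value $1$ at a deep level and at a prescribed location in $K$. A direct level-by-level specification of $f_{n_0+1},f_{n_0+2},\ldots$ would be awkward because the martingale compatibility restricts, at every step, the values admissible on a $\D_{j+1}$-atom in terms of its parent $\D_j$-atom value and the relative measure; realizing a value near $1$ would typically require a multi-level cascade carefully balancing these constraints, with additional care when $c$ is close to $-1$. Prescribing $f_\infty$ freely in $L^\infty(\Sigma_\infty)$ and letting the conditional expectations generate the entire martingale bypasses this difficulty: only the \emph{absolute} smallness of $D^*$ inside $D_0^{n_0}$ matters, and this is exactly what the key lemma provides.
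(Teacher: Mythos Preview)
Your argument is correct and proceeds along a genuinely different path from the paper's. The paper invokes Lemma~\ref{l:infty}, which builds---by an iterative level-by-level construction with careful balancing at each step---a single martingale $\g\in M_{\infty,1}$ extending a prescribed $h\in L^\infty(\Sigma_N)$ with $\|h\|_{L^\infty}<1$ and satisfying $\lim g_n=1$ and $\lim g_n=-1$ on dense subsets of $K$. Density of the target set then follows in one shot from this strong lemma. You instead refine the $G_\delta$ decomposition one level further into the open sets $A_{N,k}^{\pm}$ and prove each dense by a one-step construction: prescribe $f_\infty\in L^\infty(\Sigma_\infty)$ directly (equal to $\pm1$ on a tiny atom $D^*$, a balancing constant on the rest of the ambient $\D_{n_0}$-atom, and $f^0_\infty$ elsewhere), and let the conditional expectations do the work via the identification $M_\infty=L^\infty(\Sigma_\infty)$.

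The trade-off: your route is more elementary and sidesteps the iterative machinery entirely, needing only the existence of arbitrarily small atoms (your auxiliary lemma). The paper's Lemma~\ref{l:infty} is a stronger, reusable tool---it produces a martingale with genuine limit behaviour $\lim f_n(x)=\pm1$ on dense sets rather than a single large value at one level---and indeed the paper reuses it in the proof of Theorem~\ref{main:in}. One cosmetic point: in the case $c=-1$ you set $c'=-1+\ep/2$; you should note (or assume without loss of generality) that $\ep<2$ so that $c'<1$ as claimed.
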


In the case of $M_\infty$ equipped with the norm topology, the situation is quite different. Martingales with large oscillation at some point are not even dense. Indeed, if we take any everywhere convergent martingale (for example, constant zero martingale), then martingales in a small neighborhood have controlled oscillation at each point. So, the best we can obtain is the following result dealing just with divergence. Let us also stress that the comeager set from the following theorem is not $G_\delta$ but just $G_{\delta\sigma}$. Therefore it requires a different proof.

\begin{thm}\label{main:in} The set
$$\{(\f,x)\in M_\infty\times K: \liminf f_n(x)<\limsup f_n(x)\}$$
is a comeager $G_{\delta\sigma}$ subset of $M_\infty\times K$. In particular, for all $\f$ from a comeager subset of $M_\infty$
$$\liminf f_n(x)<\limsup f_n(x)$$
for $x$ from a comeager subset of $K$.
\end{thm}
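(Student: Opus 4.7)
The plan is to first settle the descriptive complexity of $A$ and then prove comeagerness by a ball-decomposition + Kuratowski--Ulam argument whose central input is the closed-subspace structure of the per-$x$ convergence sets.

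For the descriptive complexity: write
$$ A = \bigcup_{k\in\en} A_k, \qquad A_k = \{(\f,x)\in M_\infty\times K : \limsup_n f_n(x) - \liminf_n f_n(x) \geq 1/k\}. $$
Each $A_k$ is $G_\delta$ because the oscillation condition is equivalent to ``$\forall \ell\, \forall N\, \exists m,n \geq N: f_m(x)-f_n(x) > 1/k - 1/\ell$'', a countable intersection of open sets in $M_\infty\times K$ (using joint continuity of $(\f,x)\mapsto f_n(x)$, which in turn follows because $\f\mapsto f_n$ is continuous into the finite-dimensional space $L^\infty(\Sigma_n)$ and $f_n$ is locally constant on $K$).

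For the comeager statement I would show $A^c$ is meager in $M_\infty\times K$. Decompose $M_\infty = \bigcup_{R\in\en} B_R$ with $B_R := \{\f : \|\f\|_\infty \leq R\}$; since each $B_R$ is closed in $M_\infty$, meagerness of $A^c\cap(B_R\times K)$ in $B_R\times K$ transfers to meagerness in the whole product. The linear bijection $\f\mapsto \f/R$ identifies $B_R$ with $B_1 = M_{\infty,1}$ and preserves the property $\limsup>\liminf$, so it suffices to handle $R=1$. Since $K$ is second countable, the Kuratowski--Ulam theorem applied with $X=B_1$ (arbitrary topological) and $Y=K$ reduces the task to the sectional statement: for $\f$ in a comeager subset of $B_1$ with the norm topology, the convergence section $\mathrm{conv}(\f) := \{x\in K : \lim_n f_n(x)\text{ exists}\}$ is meager in $K$.

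The key algebraic observation driving the sectional statement is that, for each $x\in K$, the set $L_x := \{\f\in M_\infty : \f\text{ converges at }x\}$ is a closed linear subspace of the Banach space $M_\infty$: linearity is immediate, and closedness follows from the uniform bound $|f_n(x)-g_n(x)|\leq \|\f-\g\|_\infty$. Theorem~\ref{main:ip} supplies a single $\f^*\in M_{\infty,1}\subset M_\infty$ whose divergence set is a dense $G_\delta$ of $K$; thus $L_x$ is proper for every $x$ in this dense $G_\delta$, and a proper closed linear subspace of a Banach space has nowhere dense intersection with the unit ball, so $L_x\cap B_1$ is nowhere dense in $B_1$. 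The same applies with $L_V := \bigcap_{x\in V}L_x$ for each nonempty open $V\subset K$ from a countable base (it is a proper closed subspace because $\f^*$ diverges somewhere in $V$), and taking the countable union over $V$ shows that the set of $\f\in B_1$ whose divergence set is not dense in $K$ is meager.

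The remaining and most delicate step --- which I expect to be the main obstacle --- is upgrading ``$\mathrm{div}(\f)$ dense'' to ``$\mathrm{conv}(\f)$ meager in $K$'' for comeager $\f$. I expect this goes through a scale-refined version of the same closed-subspace argument: for each triple $(V,k,R)$, the function $\f\mapsto \sup_{x\in V}\bigl(\limsup f_n(x)-\liminf f_n(x)\bigr)$ is $2$-Lipschitz on $M_\infty$, and a scaled version of Theorem~\ref{main:ip} provides, whenever $R$ is sufficiently large relative to $1/k$, a martingale in $B_R$ with oscillation $\geq 1/k$ at some point of $V$, from which one deduces that $\{\f\in B_R : \sup_{x\in V}\mathrm{osc}(\f,x) \leq 1/k\}$ is nowhere dense in $B_R$. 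Combining these nowhere-dense statements over all such triples inside each $B_R$ and then over $R\in\en$ yields meagerness of $\mathrm{conv}(\f)$ for comeager $\f$. This scale-aware bookkeeping --- in particular the fact that the admissible $k$ depends on $R$ --- is exactly what forces the comeager set to be only $G_{\delta\sigma}$ rather than $G_\delta$, and is the step that distinguishes this proof from those of Theorems~\ref{main:pn}--\ref{main:ip}.
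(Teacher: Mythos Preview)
Your descriptive-complexity argument is fine and matches the paper's. The closed-subspace observation for $L_x$ is correct and yields a clean proof that, for norm-comeager $\f\in B_1$, the divergence set $\mathrm{div}(\f)$ is dense in $K$. But the ``delicate step'' breaks down: the set
\[
\{\f\in B_R : \textstyle\sup_{x\in V}\osc(\f,x) \le 1/k\}
\]
is \emph{not} nowhere dense in $B_R$. Since $\f\mapsto\sup_{x\in V}\osc(\f,x)$ is $2$-Lipschitz and vanishes at the zero martingale, this set contains the norm-ball of radius $1/(2k)$ about $0$, so it has nonempty interior for every $R$ and every $k$. The existence of one martingale with large oscillation in $V$ does not make the small-oscillation set thin; the Lipschitz bound you quote cuts exactly the wrong way. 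And the gap is genuine rather than cosmetic: ``$\mathrm{div}(\f)$ dense'' does not imply ``$\mathrm{conv}(\f)$ meager'', because $\mathrm{div}(\f)$ is only $G_{\delta\sigma}$ in $K$. What is needed is that for every basic $V$ some $k$ makes $\{x\in V:\osc(\f,x)\ge 1/k\}$ somewhere dense in $V$, and that $k$ necessarily depends on $\f$; by the same $2$-Lipschitz estimate it cannot be fixed on any norm-neighbourhood. A tempting rescue is to run Kuratowski--Ulam in the $K$-section direction (for comeager $x$ the section $L_x$ is a proper closed subspace, hence meager in $M_\infty$), but that direction of the theorem needs the $M_\infty$ factor to have a countable $\pi$-base, which it does not.

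The paper sidesteps all of this by arguing directly in the product: for every nonempty open $U\subset M_\infty\times K$ it produces a nonempty open $V\subset U$ and an $n$ with $G_n\cap V$ dense in $V$, so that $A^c\cap V\subset V\setminus G_n$ is meager in $V$ and hence $A^c$ is (locally, therefore globally) meager. The engine is Lemma~\ref{l:infty}: given the centre $\f^0$ of a basic open set, one constructs a norm-close $\f^1$ with oscillation at least $3/n$ on a dense set of $K$, so that the entire $\|\cdot\|_\infty$-ball of radius $1/n$ about $\f^1$ sits inside a region where $G_n$ is dense. The perturbation is tailored to $\f^0$; a single universal $\f^*$ cannot do this job, which is exactly where your approach stalls.
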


Theorems~\ref{main:ip} and~\ref{main:in} are generalizations of results from \cite{SpZe}. In Section 3 of the quoted paper the authors prove essentially the same results in the special case of martingales on the Cantor set.

\section{Auxiliary results}

In this section we collect several lemmas which will be used to prove the main results. These lemmas are of two types.
First, we establish descriptive quality of the relevant sets by showing they are $G_\delta$. This is easy and essentially
well known, but we include formulations and proofs for the sake of completeness. Further, we give some lemmas which enable us to approximate any martingale by a martingale diverging on a large set. For different types of approximation and different types of divergence we need different approaches.

Also in the results of this section we tacitly use Assumptions~\ref{a:a} and Convention~\ref{c:c}.

Let us start by two lemmas on descriptive quality of certain sets. More precisely, the first lemma states that
certain sets are $G_\delta$ and the second one points out the sequences of functions to which the first one will be applied.

\begin{lemma}
\label{l:lsc}
Let $(f_n)$ be a sequence of continuous functions on a topological space  $X$. Then the following sets are $G_\delta$ in $X$:
\begin{itemize}
	\item[(i)] $\{x\in X\colon \limsup f_n(x)=+\infty\}$,
	\item[(ii)] $\{x\in X\colon \liminf f_n(x)=-\infty\}$,
	\item[(iii)] $\{x\in X\colon \limsup f_n(x)\ge c\}$ for any $c\in\er$,
	\item[(iv)] $\{x\in X\colon \liminf f_n(x)\le c\}$ for any $c\in\er$,
  \item[(v)] $\{x\in X\colon \osc f_n(x)\ge c\}$ for any $c>0$.
\end{itemize}
\end{lemma}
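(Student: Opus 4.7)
The plan is to rewrite each of the five sets as a countable intersection of open sets, using the continuity of the $f_n$'s only to get that sublevel and superlevel sets of each $f_n$ are open. Since open sets are stable under countable unions, the only thing to check is that one can express each condition with a single outer countable intersection and, inside, arbitrary countable unions of sets of the form $\{x\in X : f_n(x) > a\}$ or $\{x\in X : f_n(x) < a\}$.

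For (i) and (iii), note that $\limsup f_n(x)\ge c$ is equivalent to the statement that for every $k\in\en$ and every $N\in\en$ there exists $n\ge N$ with $f_n(x) > c-\frac1k$. Thus
\[
\{x\in X : \limsup f_n(x)\ge c\} \;=\; \bigcap_{k\in\en}\bigcap_{N\in\en}\bigcup_{n\ge N}\{x\in X : f_n(x) > c-\tfrac1k\},
\]
and replacing $c-\frac1k$ by $k$ gives the analogous expression for $\limsup f_n(x) = +\infty$; in both cases continuity of $f_n$ makes each innermost set open, so the whole set is $G_\delta$. Cases (ii) and (iv) are identical after changing signs, using $\{x:f_n(x)<a\}$ instead.

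For (v), I would take the convention $\osc f_n(x) = \limsup f_n(x) - \liminf f_n(x)$ and observe that $\osc f_n(x)\ge c$ is equivalent to: for every $k\in\en$ and every $N\in\en$ there exist $n,m\ge N$ with $f_n(x)-f_m(x) > c-\frac1k$. Hence
\[
\{x\in X : \osc f_n(x)\ge c\} \;=\; \bigcap_{k\in\en}\bigcap_{N\in\en}\bigcup_{\substack{n,m\ge N}}\{x\in X : f_n(x)-f_m(x) > c-\tfrac1k\},
\]
and each set $\{x : f_n(x)-f_m(x) > c-\frac1k\}$ is open by continuity of $f_n-f_m$, so again the whole set is $G_\delta$.

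There is no genuine obstacle; the only mildly delicate point is writing the ``$\ge c$'' conditions in (iii), (iv), (v) as a countable intersection rather than the uncountable one suggested by the definition $\limsup f_n(x) = \inf_N \sup_{n\ge N} f_n(x)$. This is handled by the standard trick of approximating $c$ from below by the rational (or $1/k$) sequence $c-\frac1k$, which replaces $\sup_{n\ge N} f_n(x)\ge c$ by the countable family of conditions $\exists n\ge N:\, f_n(x) > c-\frac1k$.
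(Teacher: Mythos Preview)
Your proof is correct and essentially identical to the paper's: the paper gives exactly the same countable-intersection descriptions for (i), (iii) and (v), and derives (ii), (iv) from (i), (iii) by passing to $(-f_n)$. The only cosmetic difference is that in (v) the paper writes $|f_p(x)-f_q(x)|>c-\frac1k$ instead of your $f_n(x)-f_m(x)>c-\frac1k$, which is equivalent since the inner union ranges over all ordered pairs.
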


\begin{proof} The assertion (ii) follows from (i) and (iv) follows from (iii) (in both cases applied to the sequence $(-f_n)$).
The proofs of (i), (iii) and (v) follow from suitable descriptions of the respective sets:
\begin{itemize}
	\item[(i)] $\bigcap_{k\in\en}\bigcap_{n\in\en}\bigcup_{m=n}^\infty \{x\in X: f_m(x)>k\}$,
	\item[(iii)] $\bigcap_{k\in\en}\bigcap_{n\in\en}\bigcup_{m=n}^\infty \{x\in X: f_m(x)>c-\frac1k\}$,
	\item[(v)] $\bigcap_{k\in\en}\bigcap_{n\in\en}\bigcup_{p=n}^\infty\bigcup_{q=n}^\infty \{x\in X: |f_p(x)-f_q(x)|>c-\frac1k\}$.
\end{itemize}
\end{proof}

\begin{lemma}
\label{l:lsc-na-MK}
Let $Y$ be any of the spaces $M_p$ or $M_{p,1}$ for any $p\in[1,\infty]$. Let $n\in\en$ be arbitrary.
Then the function
\[
\Phi_n:(\f,(E_i))\mapsto f_n(E_n)
\]
is continuous on $Y\times K$.
\end{lemma}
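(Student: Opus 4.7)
The plan is to factor $\Phi_n$ through the finite (discrete) set $\mathcal{D}_n$ and reduce continuity to two independent facts: continuity of the coordinate projection $\varphi_n:K\to\mathcal{D}_n$ in the $K$-variable, and continuity of the atom-evaluation in the $\mathbf{f}$-variable.

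More precisely, I would first define $\widetilde\Phi_n : Y \times \mathcal{D}_n \to \mathbb{R}$ by $\widetilde\Phi_n(\mathbf{f},D) = f_n(D)$, so that
\[
 \Phi_n(\mathbf{f},(E_i)) \;=\; \widetilde\Phi_n\bigl(\mathbf{f},\varphi_n((E_i))\bigr).
\]
Since $\varphi_n$ is continuous on $K$ and $\mathcal{D}_n$ carries the discrete topology, it suffices to prove that for each fixed $D\in\mathcal{D}_n$ the map $\mathbf{f}\mapsto f_n(D)$ is continuous on $Y$; then $\widetilde\Phi_n$ is continuous on $Y\times\mathcal{D}_n$ (the $\mathcal{D}_n$-factor being finite and discrete), and $\Phi_n$ is a composition of continuous maps.

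To establish continuity of $\mathbf{f}\mapsto f_n(D)$, observe that $\Sigma_n$ is finite, so $L^p(\Sigma_n)$ is finite-dimensional, and all its $L^p$-norms are equivalent; in fact, for any $g\in L^p(\Sigma_n)$ which is constant equal to $g(D)$ on $D\in\mathcal{D}_n$ we have, when $p<\infty$, the bound $|g(D)|\le P(D)^{-1/p}\|g\|_{L^p}$ (and $|g(D)|\le\|g\|_{L^\infty}$ when $p=\infty$). Now in the norm topology of $M_p$, convergence $\mathbf{f}^{(k)}\to\mathbf{f}$ gives $\|f_n^{(k)}-f_n\|_{L^p}\to 0$ and hence $f_n^{(k)}(D)\to f_n(D)$ by the bound above; in the pointwise topology on $M_{p,1}$, convergence $\mathbf{f}^{(k)}\to\mathbf{f}$ is by definition coordinatewise convergence in $L^p(\Sigma_n)$, which again gives $f_n^{(k)}(D)\to f_n(D)$. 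Either way the map is continuous, completing the proof.

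There is no real obstacle here; the only thing one must be careful about is that Assumptions~\ref{a:a} guarantee $P(D)>0$ (so the constant $P(D)^{-1/p}$ makes sense) and that one handles the norm case $Y=M_p$ and the pointwise case $Y=M_{p,1}$ in parallel, since both are topologies for which convergence in $Y$ forces convergence of the $n$-th coordinate in $L^p(\Sigma_n)$.
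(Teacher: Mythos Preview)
Your proof is correct and follows essentially the same approach as the paper: localize in the $K$-variable using the discreteness of $\D_n$ (the paper does this by restricting to the open set $\{D_n=E_n\}$, you by factoring through $\varphi_n$), and then observe that evaluation $g\mapsto g(D)$ is a continuous linear functional on $L^p(\Sigma_n)$. The paper expresses this functional via the integral formula $g_n(E_n)=\frac1{P(E_n)}\int g_n\chi_{E_n}\di P$, while you use the explicit bound $|g(D)|\le P(D)^{-1/p}\|g\|_{L^p}$; these are equivalent.
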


\begin{proof} We will show that this function is continuous at each point of $Y\times K$.
To do that, fix an arbitrary point $(\f, (E_i))\in Y\times K$. Set
$$U=\{(\g,(D_i))\in Y\times K\colon D_n=E_n\}.$$
Then $U$ is an open neighborhood of $(\f,(E_i))$ and for each $(\g,(D_i))\in U$ we have
$$\Phi_n(\g,(D_i))=g_n(E_n)=\frac1{P(E_n)}\int g_n \chi_{E_n}\di P,$$
so $\Phi_n$ restricted to $U$ is a composition of two continuous functions
$$ (\g,(D_i))\mapsto g_n,\ (\g,(D_i))\in U,\mbox{\qquad and\qquad} h\mapsto \frac1{P(E_n)}\int h \chi_{E_n}\di P,\ h\in L^p(\Sigma_n),$$
so it is continuous.
\end{proof}

Now we continue with some lemmas on approximation.
In the following lemma we construct a special martingale which will be used to resolve the cases $M_1^u$, $M_p$ for $p\in(1,\infty)$ and $M_{p,1}$ for $p\in[1,\infty)$.

\begin{lemma}\label{l:ui} Let $N\in\en$ be given. Then there is a martingale $\f$ adapted to the filtration $(\Sigma_n)$ with the properties:
\begin{itemize}
  \item[(a)] $f_1=\dots=f_N=0$.
	\item[(b)] $\f$ is $L^p$-bounded for each $p\in[1,+\infty)$.
	\item[(c)] $\lim f_n(x)=+\infty$ for $x$ from a dense subset of $K$.
	\item[(d)] $\lim f_n(x)=-\infty$ for $x$ from a dense subset of $K$.	
\end{itemize}
\end{lemma}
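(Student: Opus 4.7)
The plan is to build $\f$ as a countable sum of spike-shaped martingale differences, each supported in a very small atom around a pre-selected target point, so that the partial sums force $\lim f_n=+\infty$ at a dense family of targets $y_k^+$ and $\lim f_n=-\infty$ at a dense family $y_k^-$. The crucial preliminary observation, a direct consequence of Assumption~\ref{a:a}, is that starting from any atom $D\in\D_n$ one can find a proper sub-atom $D'\in\D_m$ ($m>n$) with $P(D')\le P(D)/2$: indeed $D$ is split into at least two pieces at some later level, and at least one of the pieces has measure at most $P(D)/2$. Iterating inside each basic clopen cylinder $V_k$ of $K$, one obtains strictly decreasing sequences $V_k=D_0^{k,\sigma}\supsetneq D_1^{k,\sigma}\supsetneq\cdots$ at levels $L_0^{k,\sigma}<L_1^{k,\sigma}<\cdots$, where each $L_{j+1}^{k,\sigma}$ is chosen as the first level above $L_j^{k,\sigma}$ at which $D_j^{k,\sigma}$ is refined and where $P(D_{j+1}^{k,\sigma})\le P(D_j^{k,\sigma})/2$. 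We then pick $y_k^\sigma\in\bigcap_j D_j^{k,\sigma}$, using the freedom at branching refinements to keep the doubly-indexed family $\{y_k^\sigma\colon k\in\en,\sigma\in\{+,-\}\}$ pairwise distinct.

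Enumerate the triples $(k,j,\sigma)\in\en\times\en\times\{+,-\}$ as $s\mapsto(k(s),j(s),\sigma(s))$. At stage $s$, write $y=y_{k(s)}^{\sigma(s)}$ and pick $m_s$ from $\{L_j^{k(s),\sigma(s)}\}_{j\ge 1}$ large enough that: (i)~$m_s>N$ and $m_s$ exceeds every previously used level, (ii)~$P(D_{m_s-1}^*(y))\le 2^{-s^2}$, and (iii)~the atom $C_s:=D_{m_s-1}^*(y)$ contains none of the other targets $y_{k'}^{\sigma'}\in\{y_{k(s')}^{\sigma(s')}\colon s'\le s\}\setminus\{y\}$, a finite separation condition. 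All three are satisfied for all sufficiently large indices. Set $D_s:=D_{m_s}^*(y)$; our half-halving construction gives $P(D_s)\le P(C_s)/2$ and hence $c_s:=P(D_s)/(P(C_s)-P(D_s))\le 1$. Define
$$d_s=\sigma(s)\chi_{D_s}-\sigma(s)c_s\chi_{C_s\setminus D_s}.$$
Then $d_s$ is $\Sigma_{m_s}$-measurable, is supported in $C_s\in\D_{m_s-1}$, and $\int_{C_s}d_s\di P=0$, so $E(d_s\mid\Sigma_{m_s-1})=0$.

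Set $f_n=0$ for $n\le N$ and $f_n=\sum_{s\colon m_s\le n}d_s$ for $n>N$; this is a martingale by the martingale-difference property of each $d_s$, and property~(a) is clear. For~(b), $|d_s|\le 1$ on $C_s$ and $d_s$ vanishes elsewhere, so $\|d_s\|_p\le P(C_s)^{1/p}\le 2^{-s^2/p}$, giving $\|\f\|_p\le\sum_s 2^{-s^2/p}<\infty$ for every $p\in[1,\infty)$. For~(c), at the point $y_k^+$ every stage with $(k(s),\sigma(s))=(k,+)$ contributes exactly $+1$ to $f_n(y_k^+)$ (because $y_k^+\in D_s$), and there are infinitely many such stages; every stage with $(k(s),\sigma(s))\ne(k,+)$ occurring after $(k,+)$ has entered the first $s$ triples satisfies $y_k^+\notin C_s$ by~(iii), whence $d_s(y_k^+)=0$. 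Hence $f_n(y_k^+)$ is the divergent count of completed $(k,+)$-stages plus a bounded perturbation from finitely many early stages, giving $f_n(y_k^+)\to+\infty$, and $\{y_k^+\}$ is dense since $y_k^+\in V_k$. Property~(d) follows identically for the $y_k^-$'s. The main obstacle is the preliminary measure-halving construction and the coordinated choice of pairwise distinct target points, both handled by iterated application of Assumption~\ref{a:a}; once those are in hand, all three properties reduce to a triangle-inequality $L^p$ bound and finite-stage separation bookkeeping.
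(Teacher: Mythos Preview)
Your argument is correct and rests on the same mechanism as the paper's: build $\f$ as a telescoping sum of localized ``spike'' martingale differences, each supported on an atom of small enough measure that the $L^p$-norms are summable. The organization differs in a way worth noting. The paper enumerates all cylinders $(n,D)$ and at the $j$-th stage places one $+1$-spike (on some $B_j^1\subset D_j$) and one $-1$-spike (on some $C_j^1\subset D_j$); for~(c) it then \emph{constructs}, inside any given cylinder, a point that threads through an infinite subsequence of the $B_{j_r}^1$'s, gaining $+1$ at each visit. You instead \emph{pre-select} dense target points $y_k^\pm$ and aim infinitely many spikes at each one, using the separation condition~(iii) to kill cross-terms after finitely many stages. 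Your route makes the verification of (c)--(d) slightly more transparent (the partial sums at $y_k^+$ are a divergent count plus a bounded remainder), while the paper's is more parsimonious (one spike-pair per cylinder rather than infinitely many per target).

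One point wants a line of extra care. You assert that the targets $y_k^\sigma$ can be kept pairwise distinct ``using the freedom at branching refinements'', but the halving constraint $P(D_{j+1}^{k,\sigma})\le\tfrac12 P(D_j^{k,\sigma})$ may force a unique sub-atom at a given step, so the freedom is not automatic. The easy fix is to enumerate the pairs $(k,\sigma)$ and, before starting the halving chain for the $i$-th pair, first descend inside $V_k$ to an atom avoiding the finitely many targets already built (possible since $K$ has no isolated points), and only then iterate the halving. With that adjustment, conditions (i)--(iii) can indeed be met simultaneously by taking $m_s=L_J^{k(s),\sigma(s)}$ for $J$ large (your chain then forms a neighborhood base of $y_{k(s)}^{\sigma(s)}$ by compactness, so (iii) holds eventually), and the remaining estimates go through as written.
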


\begin{proof}	Let $(k_j,D_j)$, $j\in\en$, be an enumeration of the set $\{(n,D):n\in\en,D\in\D_n\}$. For $j\in\en$ we will construct natural numbers
$n_j$, $p_j$ $q_j$ and sets $A_j$, $B_j$, $C_j$, $B_j^0$, $B_j^1$, $C_j^0$ and $C_j^1$ such that the following conditions are fulfilled.
\begin{itemize}
  \item $n_1\ge N$.
	\item $k_j\le n_j<p_j<q_j<n_{j+1}$.
	\item $A_j\in\D_{n_j}$ and $A_j\subset D_j$.
	\item $B_j\in\D_{p_j}\setminus \D_{p_j+1}$, $B_j\subset A_j$ and $P(B_j)<2^{-j}$.
	\item $C_j\in\D_{q_j}\setminus \D_{q_j+1}$, $C_j\subset A_j$, $C_j\cap B_j=\emptyset$ and $P(C_j)<2^{-j}$.
	\item $B_j^0$ and $B_j^1$ are two different elements of $\D_{p_j+1}$ contained in $B_j$, $P(B_j^0)\ge P(B_j^1)$.
  \item $C_j^0$ and $C_j^1$ are two different elements of $\D_{q_j+1}$ contained in $C_j$, $P(C_j^0)\ge P(C_j^1)$.
\end{itemize}
The construction can be performed by straightforward induction using Assumptions~\ref{a:a}.

Now we are going to construct the required martingale $\f$. The construction will be done by induction.
Let $f_1$ be the constant zero function. Further, suppose that for some $n\in\en$ the function $f_n$ is constructed.
Let us describe $f_{n+1}$. It can be represented as a function defined on $\D_{n+1}$. So, fix $D\in\D_{n+1}$ and let $D'\in\D_n$ be the unique element satisfying $D\subset D'$. We set
$$f_{n+1}(D)=\begin{cases} f_n(D')+1, & n=p_j \mbox{ and }D=B_j^1, \\
f_n(D')-\frac{P(B_j^1)}{P(B_j^0)}, & n=p_j \mbox{ and }D=B_j^0, \\
f_n(D')-1, & n=q_j \mbox{ and }D=C_j^1, \\
f_n(D')+\frac{P(C_j^1)}{P(C_j^0)}, & n=q_j \mbox{ and }D=C_j^0, \\
f_n(D') & \mbox{otherwise}. \end{cases}$$

It is clear that the just defined sequence $\f=(f_n)$ is a martingale adapted to the filtration $(\Sigma_n)$ such that $f_1=\dots=f_N=0$. Moreover, it is $L^p$-bounded for any $p\in[1,+\infty)$. To show this it is enough to check that the sequence $(f_n)$ is Cauchy in $L^p$ for each $p\in[1,+\infty)$. Fix $p\in[1,+\infty)$. Then
$$\|f_{n+1}-f_n\|_{L^p}^p = \begin{cases} \begin{aligned}
 1^p P(B_j^1) +  \left(\frac{P(B_j^1)}{P(B_j^0)}\right)^p P(B_j^0)
  &\le P(B_j^1) + P(B_j^0) \\ &\le P(B_j)\le 2^{-j}, \end{aligned} & n=p_j,\\
\begin{aligned} 1^p P(C_j^1) +  \left(\frac{P(C_j^1)}{P(C_j^0)}\right)^p P(C_j^0)
  &\le P(C_j^1) + P(C_j^0)\\& \le P(C_j)\le 2^{-j}, \end{aligned} & n=q_j, \\
  0 & \mbox{otherwise.}\end{cases}$$
Therefore $\sum_{n=1}^\infty  \|f_{n+1}-f_n\|_{L^p}\le 2\sum_{j=1}^\infty (2^p)^{-j}<\infty$, hence the sequence is Cauchy in $L^p$.

It remains to prove the assertions (c) and (d). We will show (c), the proof of (d) is analogous.
Fix $n\in\en$ and $D\in\D_n$. We will find a point $x=(E_i)\in K$ such that $E_n=D$ and $\lim f_n(x)=+\infty$.
We set $E_n=D$ and define $E_1,\dots,E_{n-1}$ in the unique possible way. Set $m_1=n$ and suppose that $r\in\en$ and $m_r\in\en$ are given such that $E_1,\dots,E_{m_r}$ are already defined.

Let $j_r\in\en$ be the smallest number such that $n_{j_r}\ge m_r$ and $A_{j_r}\subset E_{m_r}$. Such a number does exist,
since $(m_r,E_{m_r})=(k_j,D_j)$ for some $j$. Then $n_j\ge k_j=m_r$ and $A_j\subset D_j=E_{m_r}$. Set $m_{r+1}=p_{j_r+1}$,
$E_{m_{r+1}}=B_{j_r}^1$ and $E_i$ for $m_r<i<m_{r+1}$ let be defined in the unique possible way.

This inductive construction provides $x=(E_i)\in K$ with $E_n=D$. Moreover,
$$f_i(x)= f_n(x)+r-1 \mbox{ for }m_r\le i<m_{r+1}, r\in\en,$$
hence $\lim\limits_i f_i(x)=+\infty$.
\end{proof}

The next lemma will be used to resolve the case of $M_\infty$ and $M_{\infty,1}$. It is inspired by \cite[Lemma 3.3]{SpZe}.

\begin{lemma}\label{l:infty} Let $N\in\en$ and $h\in L^\infty(\Sigma_N)$ be given such that $\|h\|_{L^\infty}<1$. Then there is a martingale $\f$ adapted to the filtration $(\Sigma_n)$ with the properties:
\begin{itemize}
  \item[(a)] $f_N=h$.
	\item[(b)] $\|\f\|_\infty\le 1$.
	\item[(c)] $\lim f_n(x)=1$ for $x$ from a dense subset of $K$.
	\item[(d)] $\lim f_n(x)=-1$ for $x$ from a dense subset of $K$.	
\end{itemize}
\end{lemma}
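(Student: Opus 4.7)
My plan is to imitate the proof of Lemma~\ref{l:ui} with one essential modification: the fixed unit increments must be replaced by adaptive increments chosen to keep every intermediate function in $(-1,1)$. I would start by setting $f_n=E(h\mid\Sigma_n)$ for $n\le N$, which gives $f_N=h$ and $\|f_n\|_{L^\infty}\le\|h\|_{L^\infty}<1$ for all $n\le N$. For $n\ge N$ I plan to reuse verbatim the combinatorial skeleton of Lemma~\ref{l:ui}: enumerate $\{(k,D):k\ge N,\ D\in\D_k\}$ as $(k_j,D_j)_{j\in\en}$ and, using Assumptions~\ref{a:a}, choose indices $n_1\ge N$ and $n_j<p_j<q_j<n_{j+1}$ together with atoms $A_j\in\D_{n_j}$ with $A_j\subset D_j$, $B_j\in\D_{p_j}\setminus\D_{p_j+1}$ and $C_j\in\D_{q_j}\setminus\D_{q_j+1}$ disjoint inside $A_j$, plus the two-piece refinements $B_j=B_j^0\cup B_j^1$, $C_j=C_j^0\cup C_j^1$ with $P(B_j^0)\ge P(B_j^1)$ and $P(C_j^0)\ge P(C_j^1)$.

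The inductive definition of $f_{n+1}$ from $f_n$ copies the ``otherwise'' case of Lemma~\ref{l:ui} unchanged; on the active atoms, however, the $\pm 1$ jumps are replaced by adaptive ones:
\[
\alpha_j=\tfrac12\bigl(1-|u_j|\bigr),\ u_j=f_{p_j}(B_j),\qquad
\beta_j=\tfrac12\bigl(1-|w_j|\bigr),\ w_j=f_{q_j}(C_j),
\]
and I set $f_{p_j+1}(B_j^1)=u_j+\alpha_j$, $f_{p_j+1}(B_j^0)=u_j-\alpha_j P(B_j^1)/P(B_j^0)$ and, symmetrically, $f_{q_j+1}(C_j^1)=w_j-\beta_j$, $f_{q_j+1}(C_j^0)=w_j+\beta_j P(C_j^1)/P(C_j^0)$. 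The martingale identity is built into these formulas. The crucial observation is that $\alpha_j\le 1-|u_j|$ and $P(B_j^1)/P(B_j^0)\le 1$, so both modified values differ from $u_j$ in absolute value by at most $1-|u_j|$ and therefore remain strictly in $(-1,1)$. By induction $\|f_n\|_{L^\infty}<1$ for every $n$, which yields (a) and (b).

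For (c) I would run the path-selection of Lemma~\ref{l:ui}: given $k\ge N$ and $D\in\D_k$, inductively pick $j_1,j_2,\dots$ (with $(k_{j_1},D_{j_1})=(k,D)$ and $A_{j_{r+1}}\subset B_{j_r}^1$), producing a point $x\in K$ passing through each $B_{j_r}^1$. Writing $a_r=u_{j_r}$, the values along the path obey the recursion $a_{r+1}=a_r+\tfrac12(1-|a_r|)$, and a two-case calculation gives $a_r\to 1$: when $a_r\ge 0$ one has $1-a_{r+1}=\tfrac12(1-a_r)$, halving the distance to $1$; when $a_r<0$ one has $a_{r+1}+1=\tfrac32(a_r+1)$, so $a_r$ escapes any neighbourhood of $-1$ in finitely many steps and enters the halving regime. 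Between successive updates $f_n(x)$ is constant, so $f_n(x)\to 1$. For $k<N$ one simply chooses some $D'\in\D_N$ with $D'\subset D$ and argues inside $D'$. Claim (d) is the mirror image via the $C_j$-branches, and density in $K$ follows because each basic clopen $\varphi_k^{-1}(\{D\})$ is visited by such a path.

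The main obstacle is reconciling (b) with (c): any naive adaptation of Lemma~\ref{l:ui} using a fixed positive increment would eventually push some $f_n$ outside $[-1,1]$. The device $\alpha_j=\tfrac12(1-|u_j|)$ is what makes the construction work: it is small enough to keep the counterbalancing value on $B_j^0$ admissible regardless of the measure ratio $P(B_j^1)/P(B_j^0)$, yet on the selected path the cumulative updates telescope to $1-a_1$, so the path actually reaches the limit $1$.
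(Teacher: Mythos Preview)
Your proposal is correct and follows essentially the same route as the paper. The paper likewise transplants the skeleton of Lemma~\ref{l:ui}, initializes via $f_n=E(h\mid\Sigma_n)$ for $n\le N$, and replaces the unit jumps by the adaptive increment $\eta(D')=\tfrac12\min\{1-f_n(D'),f_n(D')+1\}$, which is exactly your $\tfrac12(1-|u_j|)$; the convergence argument for (c) (first $a_r+1\mapsto\tfrac32(a_r+1)$ while $a_r<0$, then $1-a_r\mapsto\tfrac12(1-a_r)$ once $a_r\ge0$) is also the same.
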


\begin{proof} The proof is analogous to that of Lemma~\ref{l:ui}.	Let $(k_j,D_j)$, $j\in\en$, be an enumeration of the set $\{(n,D):n\in\en,D\in\D_n\}$. For $j\in\en$ we will construct natural numbers
$n_j$, $p_j$ $q_j$ and sets $A_j$, $B_j$, $C_j$, $B_j^0$, $B_j^1$, $C_j^0$ and $C_j^1$ such that the following conditions are fulfilled.
\begin{itemize}
  \item $n_1\ge N$.
	\item $k_j\le n_j<p_j<q_j<n_{j+1}$.
	\item $A_j\in\D_{n_j}$ and $A_j\subset D_j$.
	\item $B_j\in\D_{p_j}\setminus \D_{p_j+1}$ and $B_j\subset A_j$.
	\item $C_j\in\D_{q_j}\setminus \D_{q_j+1}$, $C_j\subset A_j$ and $C_j\cap B_j=\emptyset$.
	\item $B_j^0$ and $B_j^1$ are two different elements of $\D_{p_j+1}$ contained in $B_j$, $P(B_j^0)\ge P(B_j^1)$.
  \item $C_j^0$ and $C_j^1$ are two different elements of $\D_{q_j+1}$ contained in $C_j$, $P(C_j^0)\ge P(C_j^1)$.
\end{itemize}
The construction can be performed by straightforward induction using Assumptions~\ref{a:a}.

Now we are going to construct the required martingale $\f$. The construction will be done by induction.
Set $f_N=h$ and let $f_1,\dots,f_{N-1}$ be defined in the unique possible way (i.e., $f_i=E(h|\Sigma_i)$ for $1\le i\le N-1$). Further, suppose that for some $n\in\en$, $n\ge N$, the function $f_n$ is constructed such that $\|f_n\|_{L^\infty(\Sigma_n)}<1$.
Let us describe $f_{n+1}$. It can be represented as a function defined on $\D_{n+1}$. So, fix $D\in\D_{n+1}$ and let $D'\in\D_n$ be the unique element satisfying $D\subset D'$. We set $\eta(D')=\frac12\min\{1-f_n(D'),f_n(D')+1\}$ and
$$f_{n+1}(D)=\begin{cases} f_n(D')+\eta(D'), & n=p_j \mbox{ and }D=B_j^1, \\
f_n(D')-\frac{P(B_j^1)}{P(B_j^0)}\eta(D'), & n=p_j \mbox{ and }D=B_j^0, \\
f_n(D')-\eta(D'), & n=q_j \mbox{ and }D=C_j^1, \\
f_n(D')+\frac{P(C_j^1)}{P(C_j^0)}\eta(D'), & n=q_j \mbox{ and }D=C_j^0, \\
f_n(D') & \mbox{otherwise}. \end{cases}$$

It is clear that the just defined sequence $\f=(f_n)$ is a martingale adapted to the filtration $(\Sigma_n)$ such that $f_N=h$
and $\|\f\|_\infty\le 1$.

It remains to prove the assertions (c) and (d). We will show (c), the proof of (d) is analogous.
Fix $n\in\en$ and $D\in\D_n$. We will find a point $x=(E_i)\in K$ such that $E_n=D$ and $\lim f_n(x)=1$.
Without loss of generality we may suppose that $n\ge N$.
We set $E_n=D$ and define $E_1,\dots,E_{n-1}$ in the unique possible way. Set $m_1=n$ and suppose that $r\in\en$ and $m_r\in\en$ are given such that $E_1,\dots,E_{m_r}$ are already defined.

Let $j_r\in\en$ be the smallest number such that $n_{j_r}\ge m_r$ and $A_{j_r}\subset E_{m_r}$. Such a number does exist,
since $(m_r,E_{m_r})=(k_j,D_j)$ for some $j$. Then $n_j\ge k_j=m_r$ and $A_j\subset D_j=E_{m_r}$. Set $m_{r+1}=p_{j_r+1}$,
$E_{m_{r+1}}=B_{j_r}^1$ and $E_i$ for $m_r<i<m_{r+1}$ let be defined in the unique possible way.

This inductive construction provides $x=(E_i)\in K$ with $E_n=D$. Moreover, the sequence $(f_i(x))_{i\ge n}$ is non-decreasing,
so it has a limit. Further, let us show that there is some $r\ge 1$ such that $f_{m_r}(x)\ge 0$. If $f_n(x)\ge 0$, we can take $r=1$. If $f_{m_r}(x)<0$ for some $r$, then
$$f_{m_{r+1}}(x)=f_{m_r}(x)+\frac12(f_{m_r}(x)+1),$$ hence
$$f_{m_{r+1}}(x)+1=\frac32(f_{m_r}(x)+1).$$
Since $(\frac32)^k\to\infty$, there must be some $r$ with $f_{m_r}(x)\ge 0$. Let us fix such an $r$. Then for each $j\in\en$ we have $$1-f_{m_{r+j}}(x)=\frac1{2^j}(1-f_{m_r}(x)),$$
hence $f_{m_{r+j}}(x)\to 1$. Thus also $f_i(x)\to 1$.
\end{proof}

We continue by two lemmas on approximation in $M_1$. The construction is in this case more technical.

\begin{lemma}
\label{l:klicove} Let $\f\in M_1$, $\eta,\omega>0$, $n\in\en$ and $E\in \D_n$ be given. Then there exist $\g\in M_1$, $m\in\en$ and $F\in \D_m$ such that
\begin{itemize}
\item [(a)] $g_1=\dots=g_n=0$,
\item [(b)] $\norm{\g}_1<\eta$,
\item [(c)] $m>n$ and $F\subset E$,
\item [(d)] $(f_m+g_m)(F)>\omega$.
\end{itemize}
\end{lemma}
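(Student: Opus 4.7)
The strategy is to pick, for $m$ sufficiently large, a small atom $F\in\D_m$ inside $E$ on which $\int_F f_m\,\di P$ is not too negative, and to take $\g$ to be the sequence of conditional expectations of a simple function $h$ supported on $E$ which has a large positive spike on $F$ balanced by a small negative value on $E\setminus F$. Concretely, once such $F$ is chosen, I set $L=\omega-f_m(F)+1$ (or just $\g=0$ if one already has $f_m(F)>\omega$) and
\[
 h=L\chi_F-\frac{LP(F)}{P(E\setminus F)}\chi_{E\setminus F},\qquad g_k=E(h\mid\Sigma_k).
\]
Since $h\in L^\infty(\Sigma_m)$, vanishes on every atom of $\D_n$ other than $E$, and satisfies $\int_E h\,\di P=0$, one gets $E(h\mid\Sigma_n)=0$ and hence $g_1=\dots=g_n=0$; because $h$ is $\Sigma_m$-measurable, $g_m=h$, so $g_m(F)=L$ and $(f_m+g_m)(F)=\omega+1>\omega$. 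Moreover,
\[
 \|\g\|_1=\|h\|_{L^1}=2LP(F)=2(\omega+1)P(F)-2\int_F f_m\,\di P,
\]
so choosing, e.g., $\delta=\eta/(5(\omega+1))$ and $\gamma=\eta/5$, any $F$ with $P(F)<\delta$ and $\int_F f_m\,\di P\ge-\gamma$ automatically yields $\|\g\|_1\le 4\eta/5<\eta$.

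The core step is therefore the following sublemma: for every $\gamma,\delta>0$ there exist $m>n$ and $F\in\D_m$ with $F\subset E$, $P(F)<\delta$, and $\int_F f_m\,\di P\ge-\gamma$. I would prove it by a counting argument. Call $F\in\D_m$ with $F\subset E$ \emph{bad} at level $m$ if $\int_F f_m\,\di P<-\gamma$; since the bad atoms are pairwise disjoint,
\[
 \gamma\cdot\#\{\text{bad atoms at level }m\}\le\sum_{F\text{ bad}}\Bigl|\int_F f_m\,\di P\Bigr|\le\int_E|f_m|\,\di P\le\|\f\|_1,
\]
so at every level there are fewer than $\|\f\|_1/\gamma$ bad atoms, a bound independent of $m$. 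On the other hand, Assumption~\ref{a:a}(b) forces the cardinality of $\{F\in\D_m:F\subset E\}$ to tend to infinity: if it stabilized at some $m_0$, each atom of the resulting finite partition of $E$ would have to remain an atom of all subsequent $\D_m$, contradicting~\ref{a:a}(b). Hence the number of non-bad atoms in $E$ tends to $\infty$, and since they partition a subset of $E$ of $P$-measure at most $P(E)$, the minimum of $P(F)$ over them is at most $P(E)/\#\{\text{non-bad atoms}\}\to 0$; for $m$ large enough, a non-bad atom with $P(F)<\delta$ therefore exists.

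The main obstacle is this sublemma. The delicate point is the interplay between the two sources of control: $L^1$-boundedness of $\f$ caps the number of bad atoms uniformly in $m$, while Assumption~\ref{a:a}(b) forces the total number of atoms below $E$ to grow without bound, so the pigeonhole on $P(F)$ still succeeds even though a priori $f_m$ could be very negative on most of $E$. Once the sublemma is in hand, verifying (a)--(d) reduces to the direct computation sketched above.
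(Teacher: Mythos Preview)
Your proof is correct and takes a genuinely different, more elementary route than the paper's. The paper first decomposes $\f=\f^u+\f^s$ into its uniformly integrable and singular parts, invokes almost sure convergence of each piece to pick a point $(F_i)\in K$ along which $f^s_i(F_i)\to0$ and $f^u_i(F_i)\to c$, then splits into two cases according to whether $\lim P(F_i)=0$; in each case it places the perturbation on two small atoms $F,F'$ inside a set of tiny measure, and uses uniform integrability of $\f^u$ to bound $\int_F|f^u_m|$. You bypass all of this machinery. Your sublemma extracts a suitable atom directly from the $L^1$-bound: since $\sum_{F\subset E}\bigl|\int_F f_m\bigr|\le\|\f\|_1$, at most $\|\f\|_1/\gamma$ atoms can be ``bad'', while Assumption~\ref{a:a}(b) forces the number of atoms of $\D_m$ inside $E$ to diverge, so pigeonhole yields arbitrarily small good atoms. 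This uses neither the decomposition, nor the convergence theorem, nor a case distinction. Your perturbation $h$ is also shaped differently---supported on all of $E$ with a small negative value on $E\setminus F$---but this is harmless for the norm estimate and makes the verification of $g_1=\dots=g_n=0$ immediate from $\int_E h=0$ and $E\in\D_n$. One small point worth making explicit: choose $\delta<P(E)$ so that $P(E\setminus F)>0$ and the definition of $h$ makes sense.
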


\begin{proof}
We have $\f=\f^s+\f^u$, where $\f^s\in M_1^s$ and $\f^u\in M_1^u$.
We start the proof by finding a number $\ep_1\in (0,\frac{\eta}4)$ satisfying
\begin{equation}
\label{e:epsilon}
(\omega+2)\ep_1<\frac{\eta}8.
\end{equation}
Using Assumptions~\ref{a:a} we find $m_1>n$ and $E_1\subset E$ in $\D_{m_1}$ such that $P(E_1)<\ep_1$.
Further, using uniform integrability we select $\ep_2\in (0,\ep_1)$ such that
\begin{equation}
\label{e:un-int}
\forall k\in\en\ \forall F\in\D_k\colon \left(P(F)<\ep_2\implies \int_F \abs{f^u_k}\, \di P<\frac{\eta}{16}\right).
\end{equation}
Using again Assumptions~\ref{a:a} we find $m_2>m_1$ and $E_2\subset E_1$ in $\D_{m_2}$ such that $P(E_2)<\ep_2$.
Now we find a sequence $(F_i)\in K$ such that
\[
 F_{m_2}=E_2,\quad
\lim f^s_i(F_i)=0\quad\mbox{and}\quad \lim f^u_i(F_i)\in \er.
\]
This is possible since $(f^s_i)$ converges almost surely to zero and $(f^u_i)$ converges almost surely.
Denote $c=\lim f^u_i(F_i)$.
Let $m_3>m_2$ be so large that
\[
f^s_k(F_k)>-1\quad\mbox{and}\quad {f^u_k(F_k)}\in (c-1, c+1)
\]
for $k\ge m_3$.

Now we distinguish two cases.

\smallskip

\emph{Case 1. $\lim P(F_i)=0$.}

In this case we find $m_4>m_3$ so large that
\[
(|c|+\omega+2)P(F_k)<\frac{\eta}8
\]
for $k\ge m_4$.
We find $m>m_4$  such that $P(F_m)<P(F_{m_4})$ (using Assumptions~\ref{a:a}). Pick $F'\in \D_m\setminus\{F_m\}$ such that $F'\subset F_{m_4}$.
We set $F=F_m$.

Let
\[
\alpha=|c|+\omega+2,\quad \alpha'=-\alpha\frac{P(F)}{P(F')}.
\]
Define a martingale $\h$ by the formula
\[
h_k(D)= \begin{cases} \alpha & k\ge m, D\in\D_k,D\subset F, \\
 \alpha' & k\ge m, D\in\D_k, D\subset F', \\
 0 & \mbox{otherwise}.\end{cases}
\]
Then
\[
\norm{\h}_1= |\alpha'|P(F')+|\alpha|P(F)\le 2|\alpha|P(F_{m_4})= 2(|c|+\omega+2)P(F_{m_4})<\frac{\eta}4.
\]
Further,
\[
(f_m+h_m)(F)=f^s_m(F)+f^u_m(F)+h_m(F)\ge -1+c-1+|c|+\omega+2\ge\omega.
\]

\smallskip

\emph{Case 2. $\lim P(F_i)>0$.}

Denote $\beta=\lim P(F_i)$. Then there exists $m_5>m_3$ such that
 $P(F_i)<2\beta$ for $i\ge m_5$. Then for each $m_5\le i\le j$ it holds
\[
\frac{P(F_i)}{P(F_j)}\le \frac{2\beta}{\beta}\le 2 \mbox{\quad for } m_5\le i\le j.
\]
 Using Assumptions~\ref{a:a} we find an index $m>m_5$ for which $P(F_m)<P(F_{m_5})$. Let $F=F_{m}$. Then $\frac{P(F_{m_5})}{P(F)}\le 2$. Pick $F'\in \D_m\setminus\{F\}$, $F'\subset F_{m_5}$. Define
\[
\alpha=|c|+\omega+2,\quad \alpha'=-\alpha\frac{P(F)}{P(F')}.
\]
Define a martingale $\h$ by the formula
\[
h_k(D)= \begin{cases} \alpha & k\ge m, D\in\D_k,D\subset F, \\
 \alpha' & k\ge m, D\in\D_k, D\subset F', \\
 0 & \mbox{otherwise}.\end{cases}
\]
Then
\[
\begin{split}
\norm{\h}_1&= |\alpha'|P(F')+|\alpha|P(F)\le 2|\alpha|P(F_{m_5})= 2(|c|+\omega+2)P(F_{m_5})\\
&\le 2|c|P(F_{m_5})+2(\omega+2)P(E_1)\\
&\le 2(|f^u_m(F)|+1)P(F)\frac{P(F_{m_5})}{P(F)}+2(\omega+2)\ep_1\\
&\le 4\int_F \abs{f^u_m} \di P+2 P(F_{m_5})+2\cdot\frac{\eta}8\\
&\le 4\cdot\frac{\eta}{16}+2\ep_1+\frac{\eta}4<\eta.
\end{split}
\]
Further,
\[
(f_m+h_m)(F)=f^s_m(F)+f^u_m(F)+h_m(F)\ge -1+c-1+|c|+\omega+2\ge\omega.
\]\

The proof is finished.
\end{proof}

\begin{lemma}
\label{l:indukce}
Let $\f\in M_1$, $\eta>0$, $m\in\en$ and $E\in\D_m$. Then there exist $\g\in M_1$ and $(F_i)\in K$ such that $\norm{\f-\g}_{1}<\eta$, $F_m=E$ and $\limsup g_i(F_i)=\infty$.
\end{lemma}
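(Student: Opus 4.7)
The plan is to iterate Lemma~\ref{l:klicove} and construct $\g$ as $\f$ plus an absolutely summable series of corrections $\g = \f + \sum_{k\ge 1}\g^{(k)}$. Each correction is chosen to divert the martingale to an ever larger value on a progressively finer atom, while property~(a) of Lemma~\ref{l:klicove} guarantees that later corrections vanish identically at the levels where earlier values have already been arranged; this is what makes the iteration telescope cleanly.

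Formally, I would set $\f_0 = \f$, $m_0 = m$, $E_0 = E$, and, inductively for $k\ge 0$, apply Lemma~\ref{l:klicove} to $\f_k$ with tolerance $\eta/2^{k+2}$, target $\omega = k+1$, index $n = m_k$ and atom $E = E_k$. This yields a martingale $\g^{(k+1)} \in M_1$, an index $m_{k+1} > m_k$, and an atom $E_{k+1}\in\D_{m_{k+1}}$ with $E_{k+1}\subset E_k$ such that $g^{(k+1)}_i = 0$ for $i\le m_k$, $\norm{\g^{(k+1)}}_1 < \eta/2^{k+2}$, and
\begin{equation*}
\bigl((\f_k)_{m_{k+1}} + g^{(k+1)}_{m_{k+1}}\bigr)(E_{k+1}) > k+1;
\end{equation*}
set $\f_{k+1} = \f_k + \g^{(k+1)}$. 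Since $\sum_{k\ge 1}\norm{\g^{(k)}}_1 < \eta/2$, absolute summability in the Banach space $M_1$ produces a limit $\g = \f + \sum_{k\ge 1}\g^{(k)} \in M_1$ with $\norm{\g-\f}_1 < \eta$.

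For each $k\ge 0$, the vanishing condition $g^{(j)}_i = 0$ for $i \le m_{j-1}$, applied with $j\ge k+2$ and $i = m_{k+1}$, yields $g_{m_{k+1}}(E_{k+1}) = (\f_{k+1})_{m_{k+1}}(E_{k+1}) > k+1$. To package this into a single path, I set $F_{m_k} = E_k$ for every $k\ge 0$ and, on the intermediate indices, take $F_i$ to be the unique atom of $\D_i$ containing $E_{k+1}$ when $m_k \le i < m_{k+1}$ (and containing $E_0$ when $i < m_0$). Since $\D_i$ refines $\D_{m_k}$ whenever $i\ge m_k$, the sequence $(F_i)$ is a nested chain of atoms, hence an element of $K$ with $F_m = E$; the displayed inequality then gives $g_{m_{k+1}}(F_{m_{k+1}}) > k+1$ for all $k$, so $\limsup_i g_i(F_i) = +\infty$. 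The only step that needs genuine care is this telescope verification --- that the tail corrections do not spoil values already arranged at earlier levels --- but this is exactly the role of conclusion~(a) of Lemma~\ref{l:klicove}; everything else is routine bookkeeping about summability and the inverse-limit structure of $K$.
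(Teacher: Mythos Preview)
Your proof is correct and follows essentially the same approach as the paper's: both iterate Lemma~\ref{l:klicove} to build a sequence of small corrections whose early coordinates vanish, so that the values arranged at earlier levels are unaffected by later steps, and then pass to the limit in $M_1$. The only cosmetic difference is that you phrase the construction as an absolutely summable series $\g=\f+\sum_k\g^{(k)}$ while the paper works with the Cauchy sequence $(\f^n)$ of partial sums; the telescoping argument and the construction of $(F_i)\in K$ are the same in substance.
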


\begin{proof}
For $n\ge 0$, we inductively construct martingales $\f^n\in M_1$, indices $m_n\in\en$ and sets $E_n\in \D_{m_n}$ such that $\f^0=\f$, $m_0=m$, $E_0=E$ and for every $n\in\en$ the following conditions are satisfied:
\begin{itemize}
\item [(a)] $m_n>m_{n-1}$,
\item [(b)] $E_{n}\subset E_{n-1}\subset E_0$, $E_n\in \D_{m_n}$,
\item [(c)] $f^{n}_j=f^{n-1}_j$ for $j\le m_{n-1}$,
\item [(d)] $f^{n}_{m_{n}}(E_{n})>n$.
\item [(e)] $\norm{\f^n-\f^{n-1}}_1<\frac{\eta}{2^{n}}$.
\end{itemize}

Define $\f^0$, $m_0$ and $E_0$ as required. Assume that $n\in\en$ and that we have constructed the required objects up to $n-1$.
We use Lemma~\ref{l:klicove} for $\f=\f^{n-1}$, $\omega=n$, index $m_{n-1}$ in place of $n$, $E=E_n$ and $\frac{\eta}{2^{n}}$ in place of $\eta$.
We obtain $\f^{n}$, $m_{n}>m_{n-1}$, $E_{n}\subset E_{n-1}$ in $\D_{m_{n}}$ such that the conditions (c)-(e) are fulfilled.
This finishes the inductive construction.

By (e) we have
$$\sum_{n=1}^\infty \norm{\f^n-\f^{n-1}}_1 < \eta<  \infty,$$
so $(\f^n)$ is a Cauchy sequence in $M_1$ and, if we denote by $\g$ its limit, we get $\|\g-\f\|_1< \eta$.
Let $(F_i)\in K$ be the sequence satisfying $F_{m_n}=E_n$, $n\ge 0$. Since for any $n\in\en$ and $n'\ge n$ we have
$f^{n'}_{m_n}=f^{n}_{m_n}$, it follows that $g_{m_n}=f^{n}_{m_n}$. Hence
$$g_{m_{n}}(F_{m_{n}})=f^n_{m_n}(F_{m_n})=f^n_{m_n}(E_n)>n,$$
so $\limsup g_i(F_i)=\infty$.
This concludes the proof.
\end{proof}

The next two lemmas deal with approximation in $M_1^s$. The basic idea of construction is the same as in the previous case of $M_1$, but the procedure is much simpler.

\begin{lemma}
\label{l:m1s} Let $\f\in M_1^s$, $\eta,\omega>0$, $n\in\en$ and $E\in \D_n$ be given. Then there exist $\g\in M_1^s$, $m\in\en$ and $F\in \D_m$ such that
\begin{itemize}
\item [(a)] $g_1=\dots=g_n=0$,
\item [(b)] $\norm{\g}_1<\eta$,
\item [(c)] $m>n$ and $F\subset E$,
\item [(d)] $(f_m+g_m)(F)>\omega$.
\end{itemize}
\end{lemma}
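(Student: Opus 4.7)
The plan mirrors the proof of Lemma~\ref{l:klicove}, significantly simplified because $\f\in M_1^s$ forces $\f^u=0$ (and hence the constant $c$ appearing there is zero), so there is no uniformly integrable component to worry about. The key ingredients are the fact that $f_i\to 0$ almost surely, together with Assumptions~\ref{a:a}, which let us find small atoms and strictly refine them.

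First I would use Assumptions~\ref{a:a} to find $m_1>n$ and $E_1\in\D_{m_1}$ with $E_1\subset E$ and $P(E_1)<\eta/(2(\omega+2))$. Since $f_i\to 0$ $\tilde P$-almost surely and $\tilde P(\varphi_{m_1}^{-1}(E_1))=P(E_1)>0$, I can pick $x\in\varphi_{m_1}^{-1}(E_1)$ with $f_i(\varphi_i(x))\to 0$; writing $F_i=\varphi_i(x)$ gives $F_n=E$, $F_{m_1}=E_1$, and I choose $m_3>m_1$ so that $|f_k(F_k)|<1$ for all $k\ge m_3$.

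Next, applying Assumptions~\ref{a:a} to $F_{m_3}$, I find $m>m_3$ with $P(F_m)<P(F_{m_3})$ and pick another atom $F'\in\D_m\setminus\{F_m\}$ with $F'\subset F_{m_3}$. Set $F=F_m$, $\alpha=\omega+2$, and $\alpha'=-\alpha P(F)/P(F')$, and define $g_m=\alpha\chi_F+\alpha'\chi_{F'}$. The choice of $\alpha'$ forces $\int_{F_{m_3}}g_m\di P=0$, so extending backwards by $g_k=E(g_m|\Sigma_k)$ makes $g_k$ vanish identically for every $k\le m_3$, in particular for $k\le n$. For $k>m$ I extend $\g$ by the standard concentration procedure: at each step pick a proper $\D_{k+1}$-child of the current support atom inside $F$ (and similarly inside $F'$) onto which we redistribute all the mass, zero elsewhere; by Assumptions~\ref{a:a} the supports can be made to shrink to a null set while the relations $\int g_k\chi_F=\alpha P(F)$ and $\int g_k\chi_{F'}=\alpha'P(F')$ are preserved, so $g_k\to 0$ almost surely and $\|g_k\|_{L^1}=2\alpha P(F)$ stays constant.

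Verification of the four conditions is then immediate: (a) holds by the backward construction, (c) is trivial since $F=F_m\subset E_1\subset E$ and $m>n$, and (d) follows from $(f_m+g_m)(F)=f_m(F_m)+\alpha>-1+(\omega+2)>\omega$. For (b), Jensen's inequality gives $\|g_k\|_{L^1}\le 2\alpha P(F)$ for $k<m$, while the concentration keeps $\|g_k\|_{L^1}=2\alpha P(F)$ for $k\ge m$, hence $\|\g\|_1=2\alpha P(F)\le 2(\omega+2)P(E_1)<\eta$. Finally $\g\in M_1^s$ (not just $\g\in M_1$) follows from the a.s.\ convergence arranged in the forward extension. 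I do not expect any real obstacle; the only step requiring some attention is writing down the forward extension so that the martingale property is maintained while the supports shrink, but Assumptions~\ref{a:a} make this routine.
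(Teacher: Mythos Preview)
Your proposal is correct and follows essentially the same approach as the paper: shrink to a small atom inside $E$, follow an almost-sure convergence path of $\f$ until $|f_k(F_k)|<1$, then place a two-atom mean-zero perturbation of height $\omega+O(1)$ on $F\cup F'$ and push it forward by concentration so that the result lies in $M_1^s$. The only cosmetic differences are that the paper uses $\alpha=\omega+1$ and simply declares $h_k=0$ for $k<m$ (relying implicitly on $F,F'$ sitting in a single $\D_{m-1}$-atom), whereas you use $\alpha=\omega+2$ and extend backward by conditional expectations, which is arguably cleaner; your forward concentration keeps the support a single atom in each of $F,F'$, while the paper concentrates every current atom onto a child of at most half the measure, but these are equivalent once you note (as you do) that the children must be chosen so that the support measure actually tends to zero.
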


\begin{proof}
We start the proof by finding a number $\ep\in (0,\frac{\eta}4)$ satisfying
\begin{equation*}
(\omega+1)\ep<\frac{\eta}2.
\end{equation*}
Using Assumptions~\ref{a:a} we find $m_1>n$ and $E_1\subset E$ in $\D_{m_1}$ such that $P(E_1)<\ep$.
Now we find a sequence $(F_i)\in K$ such that
\[
 F_{m_1}=E_1\quad\mbox{and}\quad
\lim f_i(F_i)=0.
\]
This is possible since $\f\in M_1^s$ and hence the sequence $(f_i)$ converges almost surely to zero.
Let $m_2>m_1$ be so large that $f_k(F_k)>-1$ for $k\ge m_2$.
We pick $m>m_2$ such that $P(F_m)<P(F_{m_2})$ (using Assumptions~\ref{a:a}).
Pick $F'\in \D_m\setminus\{F_m\}$ such that $F'\subset F_{m_2}$.
We denote $F=F_m$.

Set
\[
\alpha=\omega+1,\quad \alpha'=-\alpha\frac{P(F)}{P(F')}.
\]
Now we will define a martingale $\h$ as follows: We set $h_k=0$ for $k<m$ and
\[
h_m(D)= \begin{cases} \alpha, & D=F, \\
 \alpha', & D = F', \\
 0 & \mbox{otherwise}.\end{cases}
\]
The functions $h_k$, $k>m$, are defined inductively as follows. Fix $k>m$ and suppose that $h_i$ is defined for $i<k$.
Fix any $D\in \D_{k-1}$. If $D\in\D_k$, we set $h_k(D)=h_{k-1}(D)$. If $D\notin \D_k$, we choose $D'\in \D_k$, $D'\subset D$ such that $P(D')\le\frac12P(D)$.
We set
$$h_k(E)=\begin{cases} \frac{P(D)}{P(D')} h_k(D), & E=D',\\
0, & E\in\D_k,E\subset D,E\ne D'. \end{cases}$$
If we perform this construction for each $D\in\D_{k-1}$, we will have constructed $h_k$ completing thus the induction step.

It is clear that the constructed sequence $\h=(h_k)$ is a martingale. Moreover,
\[
\norm{\h}_1= |\alpha'|P(F')+|\alpha|P(F)\le 2|\alpha|P(F_{m_2})\le 2(\omega+1)\varepsilon<\eta.
\]
Further,
\[
(f_m+h_m)(F)> -1+\omega+1=\omega.
\]
Finally, $\h\in M_1^s$. Indeed, set $A_k=\{\omega\in \Omega:h_k(\omega)\ne 0\}$. Then the sequence $(A_k)_{k\ge m}$ is decreasing and, moreover, for each $k\ge m$ there is $k'>k$ such that $P(A_{k'})\le\frac12 P(A_{k})$.

The proof is finished.
\end{proof}

\begin{lemma}
\label{l:m1sind}
Let $\f\in M_1^s$, $\eta>0$, $m\in\en$ and $E\in\D_m$. Then there exist $\g\in M_1^s$ and $(F_i)\in K$ such that $\norm{\f-\g}_1<\eta$, $F_m=E$ and $\limsup g_i(F_i)=\infty$.
\end{lemma}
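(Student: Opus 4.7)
The proof will closely parallel that of Lemma~\ref{l:indukce}, simply replacing the application of Lemma~\ref{l:klicove} by Lemma~\ref{l:m1s}. The plan is to build inductively a sequence of martingales $\f^n \in M_1^s$, indices $m_n$, and sets $E_n \in \D_{m_n}$ satisfying: $\f^0 = \f$, $m_0 = m$, $E_0 = E$, and for each $n \ge 1$:
(a) $m_n > m_{n-1}$; (b) $E_n \subset E_{n-1}$, $E_n \in \D_{m_n}$; (c) $f^n_j = f^{n-1}_j$ for $j \le m_{n-1}$; (d) $f^n_{m_n}(E_n) > n$; (e) $\norm{\f^n - \f^{n-1}}_1 < \eta/2^n$.

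The inductive step is immediate: given $\f^{n-1} \in M_1^s$, apply Lemma~\ref{l:m1s} with input martingale $\f^{n-1}$, parameters $\omega = n$, the fresh error $\eta/2^n$, index $m_{n-1}$ in place of $n$, and set $E_{n-1}$ in place of $E$. This produces a correction martingale $\h^n \in M_1^s$ that vanishes on the first $m_{n-1}$ levels, has $M_1$-norm less than $\eta/2^n$, and together with $\f^{n-1}$ achieves value exceeding $n$ on some $E_n \subset E_{n-1}$ at some level $m_n > m_{n-1}$. Setting $\f^n = \f^{n-1} + \h^n$ keeps us in $M_1^s$ because $M_1^s$ is a linear subspace, and all of (a)--(e) hold.

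By (e) we have $\sum_{n=1}^\infty \norm{\f^n - \f^{n-1}}_1 < \eta$, so $(\f^n)$ is Cauchy in $M_1$ and converges to some $\g \in M_1$ with $\norm{\g - \f}_1 < \eta$. The crucial point that must be checked here is that $\g$ still lies in $M_1^s$: this follows from the fact that $M_1^s$ is a closed subspace of $M_1$, which in turn follows from the direct sum decomposition $M_1 = M_1^u \oplus M_1^s$ with the projection onto $M_1^u$ having norm one (as noted in Section~2), so $M_1^s = \ker(\f \mapsto \f^u)$ is closed. Let $(F_i) \in K$ be any extension with $F_{m_n} = E_n$ for all $n \ge 0$; such a sequence exists since $(E_n)$ is a decreasing chain in the partitions $\D_{m_n}$.

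Finally, condition (c) applied iteratively gives $f^{n'}_{m_n} = f^n_{m_n}$ for all $n' \ge n$, and passing to the $M_1$-limit yields $g_{m_n} = f^n_{m_n}$ (since convergence in $M_1$ implies coordinatewise convergence in each $L^1(\Sigma_j)$). Therefore
\[
g_{m_n}(F_{m_n}) = f^n_{m_n}(E_n) > n,
\]
so $\limsup_i g_i(F_i) = \infty$, completing the proof. The only real subtlety compared to Lemma~\ref{l:indukce} is the closedness of $M_1^s$, which is the step I expect to be the main obstacle; everything else is a direct transcription of the earlier argument.
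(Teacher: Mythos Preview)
Your proof is correct and follows essentially the same approach as the paper, which simply directs the reader to repeat the proof of Lemma~\ref{l:indukce} using Lemma~\ref{l:m1s} in place of Lemma~\ref{l:klicove} and constructing the sequence in $M_1^s$. Your explicit verification that $\g\in M_1^s$ via the closedness of $M_1^s$ (as the kernel of the norm-one projection onto $M_1^u$) is a nice addition that the paper leaves implicit.
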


\begin{proof}
The proof is exactly the same as the proof of Lemma~\ref{l:indukce}. We only should refer to Lemma~\ref{l:m1s} instead to Lemma~\ref{l:klicove} and the sequence $(\f^n)$ should be constructed in $M_1^s$ rather than in $M_1$.
\end{proof}

The last result of this section says that a typical martingale from $M_{1,1}$ converges almost surely to zero.
This is used to prove Theorem~\ref{main:1p}.

\begin{prop}\label{p:residual} The set $M_{1,1}^s=M_{1,1}\cap M_1^s$ is comeager in $M_{1,1}$.
\end{prop}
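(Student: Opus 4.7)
The plan is to show that $M_{1,1}^s$ is a dense $G_\delta$ subset of $M_{1,1}$, from which comeager follows by the Baire category theorem (since $M_{1,1}$ is completely metrizable). The key is a lower semi-continuous characterization of $M_1^s$. By Proposition~\ref{p:zaklad}, every $\f\in M_{1,1}$ converges a.s.\ to some $f_\infty\in L^1(\Sigma_\infty)$, so $\f\in M_1^s$ iff $f_\infty=0$ a.s. The sequence $(\sup_{n\ge N}|f_n|)_N$ is non-increasing with a.s.\ pointwise limit $|f_\infty|$, and for such a monotone sequence a.s.\ convergence is equivalent to convergence in $\tilde P$-measure; consequently $\f\in M_1^s$ is equivalent to
$$\forall\, m, k \in \en,\ \exists\, N \in \en:\ \tilde P\bigl(\{x\in K : \sup_{n\ge N}|f_n(x)| > 1/m\}\bigr) < 1/k.$$

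For the $G_\delta$ structure, set $\Psi_{N,m}(\f) := \tilde P(\sup_{n\ge N}|f_n| > 1/m)$. By Lemma~\ref{l:lsc-na-MK} each evaluation $\f\mapsto f_n(x)$ is continuous on $M_{1,1}$, hence $\f\mapsto\sup_{n\ge N}|f_n(x)|$ is lower semi-continuous and so is the indicator $\mathbf{1}[\sup_{n\ge N}|f_n(x)| > 1/m]$; Fatou's lemma then gives that $\Psi_{N,m}$ is lsc on $M_{1,1}$. Each $\{\Psi_{N,m} < 1/k\}$ is therefore open and
$$M_{1,1}^s = \bigcap_{m,k\in\en}\ \bigcup_{N\in\en}\ \{\f\in M_{1,1} : \Psi_{N,m}(\f) < 1/k\}$$
is $G_\delta$. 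For density, given $\f\in M_{1,1}$ and a basic neighborhood $U=\{\g : \|g_j-f_j\|_{L^1}<\epsilon,\ j\le N_0\}$, I set $g_j=f_j$ for $j\le N_0$ and extend atom-by-atom: on each $D\in\D_{N_0}$ with value $a=f_{N_0}(D)$, use Assumptions~\ref{a:a} to split $D$ into a complement carrying a value close to $0$ and a ``spike'' atom of small measure carrying the compensating value dictated by the martingale condition, then iterate on the spike. As in the construction behind Lemma~\ref{l:m1s}, this keeps $\|g_n\chi_D\|_{L^1}$ equal to $|a|P(D)$ at every level while pushing the large values onto atoms of arbitrarily small measure, producing $\g\in M_{1,1}^s$ with $\|\g\|_1=\|f_{N_0}\|_{L^1}\le 1$ and $g_j=f_j$ for $j\le N_0$.

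I expect the main subtlety to lie in the $G_\delta$ step: the remark preceding the proposition indicates the authors do not know whether $M_{1,1}^s$ is $G_\delta$, suggesting either that the Fatou-based argument above has a flaw I have overlooked or that the authors simply did not see this route. Should the $G_\delta$ approach fail, the fallback is to decompose $M_{1,1}\setminus M_{1,1}^s=\bigcup_k\{\f : \|f_\infty\|_{L^1}\ge 1/k\}$ and show each piece is nowhere dense, using Lemma~\ref{l:m1sind} (or a close variant for $M_1$ in place of $M_1^s$) to locally perturb any candidate ``bad'' $\f$ into $M_{1,1}^s$ within any prescribed open set.
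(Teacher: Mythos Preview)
Your proof is correct and takes a genuinely different route from the paper's. The authors prove Proposition~\ref{p:residual} via the Banach--Mazur game: they describe a strategy for player~II which, at each move, ``concentrates'' the current martingale onto a set of small measure while pushing its $L^1$-norm up to $1$, and then verify in three technical steps (a maximal-type estimate, a stopping-time trick, and a final contradiction) that the intersection of the played sets lies in $M_1^s$. This argument establishes comeagerness without settling the descriptive complexity of $M_{1,1}^s$; indeed, immediately after Theorem~\ref{main:1p} the paper explicitly poses as an open question whether $M_{1,1}^s$ is $G_\delta$ (or even Borel), noting that a positive answer would simplify the proof of the proposition.

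Your argument answers that question affirmatively. The key point---that $\Psi_{N,m}(\f)=\tilde P\bigl(\sup_{n\ge N}|f_n|>1/m\bigr)$ is lower semicontinuous on $M_{1,1}$---is sound: by Lemma~\ref{l:lsc-na-MK} each $\f\mapsto f_n(x)$ is continuous (this uses that $\Sigma_n$ is finite, so $L^1$-convergence in each coordinate implies pointwise convergence on~$K$), whence $\f\mapsto\mathbf 1[\sup_{n\ge N}|f_n(x)|>1/m]$ is lsc for every fixed $x$, and Fatou's lemma transfers this to the integral. The characterization $\f\in M_1^s\iff\sup_{n\ge N}|f_n|\to 0$ in measure is correct because the sequence $\bigl(\sup_{n\ge N}|f_n|\bigr)_N$ is nonincreasing and a.s.\ finite (by Doob's maximal inequality), so convergence in measure and a.s.\ convergence coincide for it. Since $M_{1,1}$ is metrizable, sequential lower semicontinuity suffices, and your displayed intersection is $G_\delta$. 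The density step is essentially the same concentration construction the paper uses in its game strategy (and in Lemma~\ref{l:m1s}); one minor imprecision is your phrase ``a value close to $0$'' for the complement---taking the value to be exactly~$0$ on the complement is what makes $\|g_n\chi_D\|_{L^1}=|a|P(D)$ hold and keeps $\|\g\|_1=\|f_{N_0}\|_{L^1}\le 1$.

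In short, your approach is shorter, more direct, and yields the stronger conclusion $M_{1,1}^s\in G_\delta$ that the authors did not obtain. The paper's Banach--Mazur argument, by contrast, is self-contained and does not rely on identifying the descriptive class of $M_{1,1}^s$; its payoff is a proof that works even if one were agnostic about that class. Your fallback sketch (decomposing the complement as $\bigcup_k\{\|f_\infty\|_{L^1}\ge 1/k\}$ and showing each piece nowhere dense) would need more: these sets are not obviously closed, so proving they have empty interior requires controlling their closures, which is essentially the work the Banach--Mazur game does. But since your main $G_\delta$ argument stands, the fallback is unnecessary.
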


\begin{proof} In the proof we will use the Banach-Mazur game. Let us briefly recall its setting. There are two players -- I and II and they play in turns nonempty open subsets of $M_{1,1}$. The player I starts by choosing a nonempty open set $U_1\subset M_{1,1}$. Then player II chooses $V_1$, player I chooses $U_2$ and so on. They should obey the rule that the chosen open set is contained in the previous move of the other player. I.e., they produce a decreasing sequence of nonempty open sets
$$U_1\supset V_1\supset U_2\supset V_2\supset U_3\supset\cdots$$
The player II wins if $\bigcap_n V_n\subset M_{1,1}^s$. To show that $M_{1,1}^s$ is comeager it is enough to describe a winning strategy for the player II.

For $\f\in M_{1,1}$, $n\in\en$ and $\varepsilon>0$ we set
$$A(\f,n,\varepsilon)=\{\g\in M_{1,1}\colon \|g_i-f_i\|_{L^1}<\varepsilon \mbox{ for }i=1,\dots,n\}.$$
These sets are open and form a basis of $M_{1,1}$. Without loss of generality we may suppose that both players use only open sets belonging to the basis, hence they construct a sequence
$$A(\f^1,n_1,\varepsilon_1)\supset A(\g^1,m_1,\delta_1)\supset A(\f^2,n_2,\varepsilon_2)\supset A(\g^2,m_2,\delta_2)\supset\cdots$$

We will describe a strategy for the player II. Suppose that the previous move of the player I is $A(\f^k,n_k,\varepsilon_k)$
(where $k\in\en$ is the number of the move). Let us define a martingale $\g^k$  as follows:
\begin{itemize}
  \item Let $q_k>n_k$ be the smallest integer such that $\D_{q_k}\ne\D_{n_k}$. (Such a number exists due to Assumptions~\ref{a:a}.)
	\item For $1\le i< q_k$ set $g_i^k=f_i^k$.
  \item Choose $g^k_{q_k}\in L^1(\Sigma_{q_k})$ such that $E(g^k_{q_k}|\Sigma_{n_k})=g^k_{n_k}$ and  $\|g^k_{q_k}\|_{L^1}=1$.
  \item If $i\ge q_k$ and $g^k_i$ is already defined, we define $g^k_{i+1}$ as follows.

  Fix any $D\in \D_i$. If $D\in\D_{i+1}$, we set $g^k_{i+1}(D)=g^k_i(D)$. If $D\notin \D_{i+1}$, we choose $D'\in \D_{i+1}$, $D'\subset D$ such that $P(D')\le\frac12P(D)$.
We set
$$g^k_{i+1}(E)=\begin{cases} \frac{P(D)}{P(D')} g^k_i(D), & E=D',\\
0, & E\in\D_{i+1},E\subset D,E\ne D'. \end{cases}$$
If we perform this construction for each $D\in\D_{i+1}$, we will have constructed $g^k_{i+1}$.
 \end{itemize}

In this way we have constructed a martingale $\g^k\in M_{1,1}$. Moreover, similarly as in the proof of Lemma~\ref{l:m1s} we see that the sequence of sets
$$\{\omega\in\Omega: g^k_i(\omega)\ne 0\},\ i\ge n_k,$$
is decreasing and the measure of these sets goes to zero. Fix $m_k\ge q_k$ such that
$$P(\{\omega\in\Omega: g^k_{m_k}(\omega)\ne 0\})<\frac1k.$$
Finally, fix $\delta_k\in(0,\min\{\frac1{k^2},\varepsilon_k\})$. The answer of the player II will be $A(\g^k,m_k,\delta_k)$.

It is clear that we have described a strategy for the player II. Next we will show that this strategy is winning. So, suppose that a run
$$A(\f^1,n_1,\varepsilon_1)\supset A(\g^1,m_1,\delta_1)\supset A(\f^2,n_2,\varepsilon_2)\supset A(\g^2,m_2,\delta_2)\supset\cdots$$
was played according to the strategy. We will continue in three steps.

\smallskip

\emph{Step 1:} Let $k\in \en$, $\h\in A(\g^k,m_k,\delta_k)$ and $j\ge m_k$. Then $$P(\{\omega\in\Omega:|h_j(\omega)|\ge k\delta_k\})<\frac2k.$$

To see this, set $E=\{\omega\in\Omega:g^k_{m_k}(\omega)=0\}$. By the construction we have $P(\Omega\setminus E)<\frac1k$. Moreover,
$$\int_{\Omega\setminus E}|g^k_{m_k}|\di P=\int_{\Omega}|g^k_{m_k}| \di P=\|g^k_{m_k}\|_{L^1}=1,$$
so	
$$\int_{\Omega\setminus E}|h_j|\di P\ge \int_{\Omega\setminus E}|h_{m_k}|\di P
\ge 1-\int_{\Omega\setminus E}|g^k_{m_k}-h_{m_k}|\di P\ge 1-{\delta_k},$$
hence
$$\int_{E}|h_j|\di P\le 1-\int_{\Omega\setminus E}|h_j|\di P\le {\delta_k}.$$
Therefore
$${\delta_k}\ge\int_{E}|h_j|\di P\ge k\delta_k P(\{\omega\in E:|h_j(\omega)|\ge k\delta_k\}).$$
Finally,
$$P(\{\omega\in\Omega:|h_j(\omega)|\ge k\delta_k\})\le P(\{\omega\in E:|h_j(\omega)|\ge k\delta_k\}) + P(\Omega\setminus E)<\frac2k.$$
This completes the first step.

\smallskip

\emph{Step 2:} Let $k\in \en$, $\h\in A(\g^k,m_k,\delta_k)$ and $r>m_k$. Then
$$P(\{\omega\in\Omega:|h_j(\omega)|\ge k\delta_k\mbox{ for some }m_k\le j\le r\})<\frac2k.$$

Indeed, for $m_k\le j\le r$ set
$$E_j=\{\omega\in\Omega:|h_j(\omega)|\ge k\delta_k\}\mbox{ and }D_j=E_j\setminus \bigcup_{m_k\le i<j}E_i.$$
Then $E_j\in\Sigma_j$ and hence $D_j\in\Sigma_j$ as well. Let us define a martingale $\h^\prime$ such that for each $i\in\en$ and $D\in\D_i$ we have:
$$h^\prime_i (D) = \begin{cases} h_j(D') & \mbox{ if } j\le i, m_k\le j\le r \mbox{ and } D'\in \D_j, D\subset D'\subset D_j,\\
h_i(D) & \mbox{otherwise.}\end{cases}$$
This formula well defines $h^\prime_i$ since the sets $D_j$, $m_k\le j\le r$ are pairwise disjoint.
Moreover, then $\h^\prime\in  A(\g^k,m_k,\delta_k)$ and
$$\{\omega\in\Omega:|h_j(\omega)|\ge k\delta_k\mbox{ for some }m_k\le j\le r\}=\{\omega\in\Omega:|h^\prime_r(\omega)|\ge k\delta_k\},$$
so we conclude by Step 1.

\smallskip

\emph{Step 3 (conclusion):} If $\h\in\bigcap_{k\in\en} A(\g^k,m_k,\delta_k)$, then $\h\in M_1^s$.

Fix $\h\in\bigcap_{k\in\en} A(\g^k,m_k,\delta_k)$. Suppose $\h\notin M_1^s$, i.e.,
$$P(\{\omega\in\Omega:h_i(\omega)\not\to0\})>0.$$
Then there is some $k\in\en$ such that
$$P(\{\omega\in\Omega:\limsup|h_i(\omega)|>\frac1k\})>\frac2k.$$
Then
$$P(\{\omega\in\Omega:|h_i(\omega)|>\frac1k\mbox{ for infinitely many }i\in\en\})>\frac2k,$$
therefore there is some $r>m_k$ such that
$$P(\{\omega\in\Omega:|h_i(\omega)|>\frac1k\mbox{ for some }m_k\le i\le r\})>\frac2k.$$
Since $k\delta_k<\frac1k$, this contradicts Step 2 and the proof is completed.
\end{proof}

\section{Proofs of the main results}\label{s:pf}

In this section we prove the main theorems (except for Theorem~\ref{main:1p} which has been already proved). All these theorems contain an `in particular' part which follows in all the four cases from the Kuratowski-Ulam theorem, see, e.g., \cite[Theorem 15.1]{oxtoby}. The theorem can be applied since $K$ has a countable base, as a compact metrizable space.

Further, the proofs of three theorems have a similar pattern. We first prove that the respective set is $G_\delta$ and subsequently we show that it is dense.

{\bf Proof of Theorem~\ref{main:pn}.} Set
$$\begin{aligned}
A&=\{(\f,x)\in Y\times K: \limsup f_n(x)=+\infty\},\\
B&=\{(\f,x)\in Y\times K: \liminf f_n(x)=-\infty\}.
\end{aligned}$$
By Lemma~\ref{l:lsc} and Lemma~\ref{l:lsc-na-MK}, both sets are $G_\delta$. So $A\cap B$ is $G_\delta$ as well. To show that $A\cap B$ is dense, it is enough to show that both sets $A$ and $B$ are dense.

If $Y=M_1$, the set $A$ is dense by Lemma~\ref{l:indukce}.
If $Y=M_1^s$, the set $A$ is dense by Lemma~\ref{l:m1sind}. Since $B=-A$, $B$ is in both cases dense as well.

Next suppose that $Y=M_1^u$  or $Y=M_p$ for some $p\in(1,\infty)$ (recall that $M_p=M_p^u$ for $p\in(1,\infty)$).
Let $\f$ be the martingale constructed in Lemma~\ref{l:ui} (the value of $N$ does not matter here, take for example $N=1$). Then $\f\in Y$ and, moreover, for any $\g\in M_\infty$ we have $\f+\g\in Y$
and
\begin{gather*}
\lim (f_n(x)+g_n(x))=+\infty \mbox{ for $x$ from a dense subset of }K,\\
\lim (f_n(x)+g_n(x))=-\infty \mbox{ for $x$ from a dense subset of }K.
\end{gather*}
Further, $M_\infty$ is dense in $Y$ (in the norm of $Y$) since $L^\infty(\Sigma_\infty)$ is dense in $L^p(\Sigma_\infty)$ for any $p\in[1,\infty)$. Hence $\{\f+\g:\g\in M_\infty\}$ is a dense subset of $Y$. It follows that both sets $A$ and $B$ are dense.
\qed\medskip

{\bf Proof of Theorem~\ref{main:pp}.} Set
$$\begin{aligned}
A&=\{(\f,x)\in M_{p,1}\times K: \limsup f_n(x)=+\infty\},\\
B&=\{(\f,x)\in M_{p,1}\times K: \liminf f_n(x)=-\infty\}.
\end{aligned}$$
By Lemma~\ref{l:lsc} and Lemma~\ref{l:lsc-na-MK} both sets are $G_\delta$. So $A\cap B$ is $G_\delta$ as well. To show that $A\cap B$ is dense, it is enough to show that both sets $A$ and $B$ are dense.

Fix an arbitrary element $(\f^0,x)\in M_{p,1}\times K$. Then $x=(D_i)$ where $D_i\in\D_i$ for $i\in\en$. Take $U$ to be any neighborhood of $(\f^0,x)$. Without loss of generality we may suppose that $U$ is a basic neighborhood, i.e.,
$$U=\{ (\g,(E_i))\in M_{p,1}\times K : E_N=D_N \mbox{ and } \|f^0_i-g_i\|_{L^p}<\varepsilon\mbox{ for }i\le N\},$$
where $\varepsilon>0$ and $N\in\en$ are given. Define a martingale $\f^1$ by
$$f^1_i=\begin{cases}(1-\frac\varepsilon2)f^0_i & i\le N,\\ (1-\frac\varepsilon2)f^0_N & i>N.\end{cases}$$
Further, let $\f$ be the martingale constructed in Lemma~\ref{l:ui} for the given value of $N$. Then $\f\in M_p$, thus there is some $\eta>0$ such that $\|\eta\f\|<\frac\varepsilon2$. Take the martingale $\g=\f^1+\eta\f$. Then $\g\in M_{p,1}$ and $\|g_i-f^0_i\|_{L^p}<\varepsilon$ for $i\le N$. Further, $\lim g_n(y)=+\infty$ for $y$ from a dense subset of $K$.
Thus, there is such $y=(F_i)$ with $F_N=D_N$. Then $(\g,y)\in A\cap U$. In the same way we can find $z\in K$ such that
$(\g,z)\in B\cap U$. This shows that $A$ and $B$ are dense.
\qed\medskip

{\bf Proof of Theorem~\ref{main:ip}.} Set
$$\begin{aligned}
A&=\{(\f,x)\in M_{\infty,1}\times K: \limsup f_n(x)=1\},\\
B&=\{(\f,x)\in M_{\infty,1}\times K: \liminf f_n(x)=-1\}.
\end{aligned}$$
Since the values $f_n(x)$ belong to the interval $[-1,1]$, it follows from Lemma~\ref{l:lsc} and Lemma~\ref{l:lsc-na-MK} that both sets are $G_\delta$. So $A\cap B$ is $G_\delta$ as well. To show that $A\cap B$ is dense, it is enough to show that both sets $A$ and $B$ are dense.

Fix an arbitrary element $(\f,x)\in M_{\infty,1}\times K$. Then $x=(D_i)$ where $D_i\in\D_i$ for $i\in\en$. Take $U$ any neighborhood of $(\f,x)$. Without loss of generality we may suppose that $U$ is a basic neighborhood, i.e.,
$$U=\{ (\g,(E_i))\in M_{\infty,1}\times K : E_N=D_N \mbox{ and } \|f_i-g_i\|_{L^\infty}<\varepsilon\mbox{ for }i\le N\},$$
where $\varepsilon>0$ and $N\in\en$ are given.
By Lemma~\ref{l:infty} there is a martingale $\g$ with the properties:
\begin{itemize}
	\item $g_N=(1-\frac\varepsilon2)f_N$,
	\item $g_N\in M_{\infty,1}$,
	\item $\lim g_n(y)=1$ for $y$ from a dense subset of $K$.
	\item $\lim g_n(y)=-1$ for $y$ from a dense subset of $K$.
\end{itemize}

So, we can take $y_1=(E_i)$ such that $E_N=D_N$ and $\lim g_n(y_1)=1$ and $y_2=(F_i)$ such that $F_N=D_N$ and $\lim g_n(y_2)=-1$. Then $(\g,y_1)\in A\cap U$ and $(\g,y_2)\in B\cap U$. This completes the proof that $A$ and $B$ are dense.
\qed\medskip

The proof of the last theorem is different since the set in question is not $G_\delta$ and it is not enough to
prove that it is just dense.

{\bf Proof of Theorem~\ref{main:in}.} The set in question is $G_{\delta\sigma}$ since it equals to $\bigcup_{n\in\en}G_n$ where
$$G_n=\left\{(\f,x)\in M_\infty\times K: \osc (f_k(x))\ge\frac1n\right\}$$
and these sets are $G_\delta$ by Lemma~\ref{l:lsc} and Lemma~\ref{l:lsc-na-MK}. To show that our set is comeager, it is enough to prove that
$$\begin{aligned}\forall\; U\subset M_\infty\times K \mbox{ nonempty open } & \exists\; V\subset U\mbox{ nonempty open } \\ &\exists\; n\in\en: G_n\cap V\mbox{ is dense in }V.\end{aligned}$$
So, fix a nonempty open set $U\subset M_\infty\times K$. Without loss of generality, it is a basic open set, so there are $\f^0\in M_\infty$, $\varepsilon>0$, $N\in \en$ and $D\in\D_n$ such that
$$U=\{(\g,(E_i))\in M_\infty\times K: \sup_i \|g_i-f_i^0\|_{L^\infty}<\varepsilon \mbox{ and }E_N=D \}.$$
Let us distinguish two possibilities:

\smallskip

\emph{Case 1.} There exists $n\in \en$ such that the set
$$\left\{x=(E_i)\in K: E_N=D \mbox{ and } \osc(f^0_k(x))\ge\frac1n\right\}$$
is somewhere dense in $K$. Then there exist $N_1\in\en$, $N_1\ge N$ and $D'\in \D_{N_1}$ with $D'\subset D$ such that for $x$ from a dense subset of
the set $\{(E_i)\in K: E_{N_1}=D'\}$ we have $\osc(f_k(x))\ge \frac1n$. Fix $\delta\in(0,\varepsilon)$ such that
$\delta<\frac1{3n}$. Set
$$V=\{(\g,(E_i))\in M_\infty\times K : \sup_i \|g_i-f_i^0\|_{L^\infty}<\delta \mbox{ and }E_{N_1}=D'\}.$$
Then $V$ is a nonempty open subset of $U$ such that $G_{3n}\cap V$ is dense in $V$.

\smallskip

\emph{Case 2.} For each $n\in \en$ the set
$$\left\{x=(E_i)\in K: E_N=D \mbox{ and } \osc(f^0_k(x))\ge\frac1n\right\}$$
is nowhere dense in $K$.
Fix $n\in\en$ such that $\frac2n<\varepsilon$. Then there are $N_1\in\en$, $N_1\ge N$ and $D'\in\D_{N_1}$  with $D'\subset D$ such that for each $x=(E_i)\in K$ such that $E_{N_1}=D'$ we have $\osc(f^0_k(x))<\frac1n$. Let $\f$ be the martingale given by Lemma~\ref{l:infty}
for $N_1$ in place of $N$ and for $h=0$. Set $\f^1=\f^0+\frac2n\f$. Then $\|\f^1-\f^0\|_\infty\le \frac2n<\varepsilon$ and for $x$ from a dense subset of the set $\{(E_i)\in K: E_{N_1}=D'\}$ we have $\osc(f^1(k))\ge \frac3n$. Set
$$V=\left\{(\g,(E_i))\in M_\infty\times K : \sup_i \|g_i-f_i^1\|_{L^\infty}<\frac1{n} \mbox{ and }E_{N_1}=D'\right\}.$$
Then $V$ is a nonempty open subset of $U$ such that $G_{n}\cap V$ is dense in $V$.

\smallskip

This completes the proof.
\qed\medskip

\section{Remarks on the general case}

In this paper we focused on martingales adapted to a given filtrations of finite $\sigma$-algebras. It it the easiest nontrivial case,
but it is, of course, only a very special case of martingales (as pointed out by one of the referees). In this section we discuss possible generalizations.

\medskip

Let us first focus on general discrete martingales. It means that we have a sequence $(\D_n)$ of countable partitions of $\Omega$ such that for each $n\in\en$ the partition $\D_{n+1}$ refines $\D_n$  and $\Sigma_n=\sigma(\D_n)$. Then we can proceed similarly as in Section 3 to define maps $\varphi_{m,n}$, the space $K$, the maps $\varphi_n$, the map $\psi$ and the measure $\tilde P$. The difference is that $K$ need not be compact, but it is a zero-dimensional Polish space. Further, there is no unique topological description of $K$ in case it has no isolated points -- it may be homeomorphic to the Cantor space $\{0,1\}^\en$, to the Baire space $\en^\en$ or to some other space. We will also suppose that Assumptions~\ref{a:a} are fulfilled (their form is the same). Then $K$ has no isolated points and the support of $\tilde P$ is $K$. Further, Convention~\ref{c:c} can be used in this case as well.

The results in Section 5 can be proved exactly in the same way as in the case the partitions $\D_n$ are finite. Only at two places one should be more careful. It is at the end of the proof of Lemma~\ref{l:m1s} where it is proved that $P(A_k)\to 0$ and at the analogous place of the proof of Proposition~\ref{p:residual}. In this case it is easy to verify, that for any $k\ge m$ there is $k'>k$ such that, say, $P(A_{k'})\le\frac34 P(A_k)$ which is enough to conclude.
Since $K$ has a countable base, the Kuratowski-Ulam theorem works in this case, too. It follows that all the main results
are valid in this more general case as well.

\medskip

The situation for general filtrations is more complicated. However, the analogous questions are also canonical and can be formulated.
Let us describe the situation.

Suppose that $(\Omega,\Sigma,P)$ is a probability space. Let $\N$ denote the $\sigma$-ideal of $P$-null sets. Let $\B$ denote the measure algebra of this probability space (see \cite[321H]{FR3}), i.e., it is the quotient Boolean algebra $\Sigma/\N$ equipped with the probability $\overline{P}$ defined by $\overline{P}([A])=P(A)$ for $A\in\Sigma$ (by $[A]$ we denote the equivalence class of $A$). Let us equip $\B$ with the Fr\'echet-Nikod\'ym metric $\rho$ defined by $\rho([A],[B])=P(A\triangle B)$ (see, e.g., \cite[323A(c)]{FR3}). It is well known and easy to see that this metric is complete on $\B$ (see, e.g., \cite[323G(c)]{FR3}).

Let $f\in L^1(\Sigma)$. Then we can define a function $\overline f:\B\setminus\{0\}\to\er$ by the formula
	$$\overline{f}([A])=\frac1{P(A)}\int_A f\di P.$$
	This function is continuous with respect to the metric $\rho$ (as a ratio of two continuous functions $[A]\mapsto \int_A f\di P$ and $[A]\mapsto P(A)$). Moreover, it satisfies the equality
	$$\overline{P}\left(\bigvee_n[A_n]\right)\overline{f}\left(\bigvee_n[A_n]\right)= \sum_n\overline{P}([A_n])\overline{f}([A_n])$$
	whenever $([A_n])$ is a disjoint sequence (finite of infinite) in $\B\setminus\{0\}$ (the symbol $\bigvee$ denotes the `join' operation in $\B$).

Suppose that we have a filtration $(\Sigma_n)$  of $\sigma$-subalgebras of $\Sigma$. Denote by $\Sigma_\infty$ the $\sigma$-algebra generated by
$\bigcup_{n=1}^\infty\Sigma_n$. Let $\B_n$ be the measure algebra of the probability space $(\Omega,\Sigma_n,P|_{\Sigma_n})$ (for any $n\in\en\cup\{\infty\}$). It is clear that
$$\B_1\subset\B_2\subset\dots\subset\B_\infty\subset\B$$
is an increasing sequence of Boolean subalgebras of $\B$, for each $n\in\en\cup\{\infty\}$ we have $\overline{P|_{\Sigma_n}}=\overline{P}|_{\B_n}$ and the Fr\'echet-Nikod\'ym metric on $\B_n$ is the restriction of $\rho$.

Now we are ready to define the space $K$:
$$ K=\left\{([B_n])\in \prod_{n\in\en}(\B_n\setminus\{0\}): (\forall n\in\en)([B_{n+1}]\le [B_n]) \right\}.$$
Since $\B_n$ is complete in the Fr\'echet-Nikod\'ym metric, $\B_n\setminus\{0\}$ is completely metrizable and the countable product
is again completely metrizable. Moreover, the set $K$ is closed in the product. Indeed, suppose that the sequence $([B_n])$ does not belong to $K$.
It means that there is some $k$ with $[B_{k+1}]\not\le[B_k]$, i.e., $\delta=P(B_{k+1}\setminus B_k)>0$. Suppose that $([C_n])$ is any sequence in the product satisfying $P(C_{k+1}\triangle B_{k+1})<\frac\delta3$ and $P(C_k\triangle B_k)<\frac\delta3$. Then $P(C_{k+1}\setminus C_k)>\frac\delta3$, so $([C_n])\notin K$. It completes the proof that the complement of $K$ is open, hence $K$ is closed. It follows that $K$ is completely metrizable.

Let $\f=(f_n)$ be a martingale adapted to the filtration $(\Sigma_n)$. For any $n\in\en$ let $\overline{f_n}$ be the continuous function on $\B_n\setminus\{0\}$ associated to $f_n$ as above. The martingale condition then means that $\overline{f_n}|_{\B_m\setminus\{0\}}=\overline{f_m}$ for $m\le n$. Therefore such a martingale can be represented as a sequence $(g_n)$ of continuous functions on $K$ defined by
$$g_n(([B_k])_{k=1}^\infty)=\overline{f_n}([B_n]) = \frac1{P(B_n)}\int_{B_n} f_n\di P, \quad ([B_k])_{k=1}^\infty\in K, n\in\en.$$
It follows that it makes sense to investigate pointwise behaviour of such martingales. Therefore it is natural to ask the following question:

\begin{question} Let $(\Omega,\Sigma,P)$ be a probability space and let $(\Sigma_n)$ be a filtration of $\sigma$-subalgebras of $\Sigma$.
Let $K$ be the above defined completely metrizable space. To any martingale $\f=(f_n)$ adapted to the filtration $(\Sigma_n)$ we assign the sequence
$(g_n)$ of continuous functions as above. Similarly as in Convention~\ref{c:c} we will write $f_n(([B_k]))$ to denote $g_n(([B_k]))$.
Further, suppose that
$$\forall n\in\en\;\forall B\in \Sigma_n, P(B)>0\; \exists m\ge n\;\exists C\in\Sigma_m : C\subset B\ \&\ 0<P(C)<P(B).$$
Are in this setting valid the analogues of the results of Section 4?
\end{question}

We conjecture that the analogous results are valid, but the proofs should be more involved. The reason is that constructing ad hoc non-discrete
martingales is not so easy. Further, it is worth to remark that the Kuratowski-Ulam theorem used to prove the `in particular parts' of several
results works only if the space $K$ is separable. It is the case when the probability $P$ is of countable type on $\Sigma_\infty$ (i.e., if it is
of countable type on each $\Sigma_n$). This condition is natural when studying individual martingales. But when we consider all the martingales adapted
to a given filtration, it makes sense to consider also probabilities of uncountable type.

\section*{Acknowledgement}

We are grateful to the referees for their helpful comments which we used to improve the presentation of the paper.


\begin{thebibliography}{10}

\bibitem{BEH}
{\sc Belna, C.~L., Evans, M.~J., and Humke, P.~D.}
\newblock Symmetric and ordinary differentiation.
\newblock {\em Proc. Amer. Math. Soc. 72}, 2 (1978), 261--267.

\bibitem{CZ}
{\sc Calude, C., and Zamfirescu, T.}
\newblock Most numbers obey no probability laws.
\newblock {\em Publ. Math. Debrecen 54}, suppl. (1999), 619--623.
\newblock Automata and formal languages, VIII (Salg{\'o}tarj{\'a}n, 1996).

\bibitem{Dol}
{\sc Dol{\v{z}}enko, E.~P.}
\newblock Boundary properties of arbitrary functions.
\newblock {\em Izv. Akad. Nauk SSSR Ser. Mat. 31\/} (1967), 3--14.

\bibitem{DoMy}
{\sc Dougherty, R., and Mycielski, J.}
\newblock The prevalence of permutations with infinite cycles.
\newblock {\em Fund. Math. 144}, 1 (1994), 89--94.

\bibitem{FR2}
{\sc Fremlin, D.~H.}
\newblock {\em Measure theory. {V}ol. 2}.
\newblock Torres Fremlin, Colchester, 2003.
\newblock Broad foundations, Corrected second printing of the 2001 original.

\bibitem{FR3}
{\sc Fremlin, D.~H.}
\newblock {\em Measure theory. {V}ol. 3}.
\newblock Torres Fremlin, Colchester, 2004.
\newblock Measure algebras, Corrected second printing of the 2002 original.

\bibitem{kechris}
{\sc Kechris, A.~S.}
\newblock {\em Classical descriptive set theory}, vol.~156 of {\em Graduate
  Texts in Mathematics}.
\newblock Springer-Verlag, New York, 1995.

\bibitem{LPT}
{\sc Lindenstrauss, J., Preiss, D., and Ti{\v{s}}er, J.}
\newblock {\em Fr\'echet differentiability of {L}ipschitz functions and porous
  sets in {B}anach spaces}, vol.~179 of {\em Annals of Mathematics Studies}.
\newblock Princeton University Press, Princeton, NJ, 2012.

\bibitem{spaces}
{\sc Long, R.~L.}
\newblock {\em Martingale spaces and inequalities}.
\newblock Peking University Press, Beijing, 1993.

\bibitem{MaMa}
{\sc Matou{\v{s}}ek, J., and Matou{\v{s}}kov{\'a}, E.}
\newblock A highly non-smooth norm on {H}ilbert space.
\newblock {\em Israel J. Math. 112\/} (1999), 1--27.

\bibitem{Mat}
{\sc Matou{\v{s}}kov{\'a}, E.}
\newblock An almost nowhere {F}r\'echet smooth norm on superreflexive spaces.
\newblock {\em Studia Math. 133}, 1 (1999), 93--99.

\bibitem{oxtoby}
{\sc Oxtoby, J.~C.}
\newblock {\em Measure and category}, second~ed., vol.~2 of {\em Graduate Texts
  in Mathematics}.
\newblock Springer-Verlag, New York, 1980.
\newblock A survey of the analogies between topological and measure spaces.

\bibitem{PT}
{\sc Preiss, D., and Ti{\v{s}}er, J.}
\newblock Two unexpected examples concerning differentiability of {L}ipschitz
  functions on {B}anach spaces.
\newblock In {\em Geometric aspects of functional analysis ({I}srael,
  1992--1994)}, vol.~77 of {\em Oper. Theory Adv. Appl.} Birkh\"auser, Basel,
  1995, pp.~219--238.

\bibitem{SpZe}
{\sc Spurn{\'y}, J., and Zelen{\'y}, M.}
\newblock Convergence of a typical martingale (a remark on the {D}oob theorem).
\newblock {\em J. Math. Anal. Appl. 414}, 2 (2014), 945--958.

\bibitem{Tr}
{\sc Troitsky, V.~G.}
\newblock Martingales in {B}anach lattices.
\newblock {\em Positivity 9}, 3 (2005), 437--456.

\bibitem{Zaj}
{\sc Zaj{\'{\i}}{\v{c}}ek, L.}
\newblock On differentiability properties of typical continuous functions and
  {H}aar null sets.
\newblock {\em Proc. Amer. Math. Soc. 134}, 4 (2006), 1143--1151 (electronic).

\end{thebibliography}
\end{document}